\newcounter{theorems}
\numberwithin{theorems}{section}
\newtheorem{thm}[theorems]{Theorem}
\newtheorem{cor}[theorems]{Corollary}
\newtheorem{lem}[theorems]{Lemma}
\newtheorem{prop}[theorems]{Proposition}
\newtheorem*{thm*}{Theorem}
\theoremstyle{definition}
\newtheorem{defin}[theorems]{Definition}
\theoremstyle{remark}
\newtheorem{ex}[theorems]{Example}
\newtheorem{rem}[theorems]{Remark}
\newtheorem*{rem*}{Remark}
\def \ci {{S^1}}
\def \eps {\varepsilon}
\def\e {\varepsilon }
\def\SS {\Sigma }
\def\Sig {{\Sigma^{s}} }
\def\om {\omega }
\def\oms {{\omega_{stat} }}
\def\As {{A_{stat} }}
\def\attr {{\mathcal A}}
\def\диффео {diffeomorphism }
\def\be {\begin{equation}}
\def\ee {\end{equation}}
\def\RR {\mathbb R}
\def\NN {\mathbb N}
\def\KV {\mathcal I}
\def \bi {\begin{itemize}}
\def \ei {\end{itemize}}
\def \be {\begin{enumerate}}
\def \ee {\end{enumerate}}
\DeclareMathOperator{\dist}{dist}
\DeclareMathOperator{\Diff}{Diff}
\DeclareMathOperator{\orb}{{\overline{orb}}}
\DeclareMathOperator{\oorb}{{orb}}
\author{A.V. Okunev\footnote{National Research University Higher School of Economics}, I.S. Shilin\footnote{Moscow State University}}
\title{On the attractors of step skew products over the Bernoulli shift\thanks{Supported in part by the RFBR (grant 16-01-00748 a).}}
\date{}
\begin{document}
\baselineskip=14pt

\maketitle

\abstract{The statistical and Milnor attractors of step skew products over the Bernoulli shift are studied. For the case of the fiber a circle we prove that for a topologically generic step skew product the statistical and the Milnor attractor coincide and are Lyapunov stable. For this end we study some properties of the projection of the attractor onto the fiber, which might be of independent interest. For the case of the fiber being a segment we give a description of the Milnor attractor as the closure of the union of graphs of finitely many almost everywhere defined functions from the base of the skew product to the fiber.

\smallskip
\noindent \textbf{Keywords:} skew products, attractors, Lyapunov stability.


\section{Introduction}
\subsection{The ``microverse'' of dynamical systems}
The space of step skew products over the Bernoulli shift with one-dimensional fiber\footnote{We consider only skew products whose fiber maps preserve orientation; this condition is assumed in most of the results below.} is a ``microverse'', of sorts, where all sorts of interesting dynamical phenomena can be observed. Rough properties discovered in the class of step skew products can (at least, sometimes) be recreated in the space of diffeomorphisms of smooth manifolds --- this can be done via the so called \emph{Gorodetski--Ilyashenko strategy}. This strategy was used in~\cite{GI99}, \cite{KN}, \cite{Ily2}, \cite{Kud} and in a number of other works. 

Among the phenomena discovered in the class of step skew products with the fiber a segment is the existence of so called \emph{bony attractors} (\cite{Kud}). A bony attractor is an attractor that is \emph{a bony graph}, and a bony graph is a union of a \emph{graph} of an almost everywhere defined function from the base of the skew product to the fiber and some \emph{bones}, which are segments in the fibers over those points of the base where this function is not defined. 

There are also two genuinely surprising phenomena in the class of skew products with the fiber a segment whose fiber maps preserve the boundary of the segment. The first one is called \emph{intermingled basins of attraction} (\cite{Kan}, see also~\cite{BM}, \cite{IKS}, \cite[\S11.1.1]{BDV}). The point is that there exists a boundary preserving map of a cylinder such that it is a skew product over the circle doubling with the fiber a segment and Lebesgue almost every point is attracted either to the upper or to the lower boundary of the cylinder, basins of both boundaries being everywhere dense. Note that in this example the attractor (the union of two boundary circles) is Lyapunov unstable, because arbitrarily close to the upper boundary circle there are points attracted to the lower one. Perturbing this example in the class of boundary preserving maps of a cylinder,  one can obtain an open set of maps with the same properties. 
The second unexpected phenomenon is the local typicality of maps with \emph{thick} attractors~(\cite{Ily}, \cite{Ily2}), i.e., attractors that have positive but not full Lebesgue measure. 

Besides these examples we should also mention several general results on skew products with one-dimensional fiber. V.~Kleptsyn and D.~Volk proved in~\cite{KV} that for typical\footnote{for an open and everywhere dense set, actually.} step skew products with the fiber a segment there are finitely many ``attractor'' bony graphs (possibly, without bones) that attract almost all points of the phase space, with the exception of finitely many ``repeller'' bony graphs. They also showed that there exists a finite number of $SRB$-measures such that the union of their basins has full Lebesgue measure.
M.~Viana and J.~Yang~\cite{VY} proved the latter property for a wide class of partially hyperbolic diffeomorphisms with one-dimensional central foliation; this class includes partially hyperbolic skew products with the fiber a segment or a circle.

The properties of step skew products with the fiber a circle are in a sense similar to the properties of skew products with the fiber a segment. Namely, for a typical step skew product with circle fiber, either it is minimal (i.e., the orbit of any point of the circle under the action of the semigroup generated by the fiber maps is dense), or all fiber maps have a common absorbing domain that is a union of finitely many segments (\cite{KKO}, in preparation). 
 
We are interested in the following two questions about the attractors of dynamical systems posed by Yu.~S.~Ilyashenko:
\begin{itemize}
\item Is there an open set of diffeomorphisms with Lyapunov unstable attractors?
\item Is there an open set of diffeomorphisms with ``thick'' attractors (i.e., attractors that have positive but not full measure)?
\end{itemize}
In the present work we will discuss the statistical and Milnor attractors. These definitions were introduced in~\cite{AAISh} and~\cite{Mil} respectively, and we recall them in section~\ref{s:attractors} below. 

In the case of arbitrary diffeomorphisms of compact manifolds (of dimension 2 or greater) Lyapunov instability of the attractors is a locally topologically generic phenomenon: a locally residual set of systems with Lyapunov unstable attractors can be found in the so called Newhouse domains~(\cite{Shi}). Yet the question about the existence of an open set of diffeomorphisms with unstable attractors is still open. 

As we mentioned earlier, in the class of boundary-preserving step skew products with the fiber a segment, there are open domains where skew products have unstable~(\cite{Kan}) or thick~(\cite{Ily}) attractors. However, the boundary preservation requirement does not look natural. Therefore, a question arises whether such domains exist if we do not require the boundary to be mapped into itself. It turns out that there are none.  

For a typical step skew product with the fiber a segment the attractor has zero measure --- this was proved in~\cite{KV} (see also~\cite{KV2}). A typical step skew product with the fiber a circle is, by the dichotomy from~\cite{KKO}, either minimal (then the attractor is the whole phase space) or has an absorbing domain (then the attractor has zero measure, which is deduced from the analogous result for the segment case).     

\subsection{The main results}

Let us state the main results of the paper. The required definitions can be found in section~\ref{s:definitions} below.

\paragraph{SSPs with one-dimensional fiber.}
We will consider two classes of step skew products (SSPs) over the Bernoulli shift --- SSPs with the fiber a circle such that the fiber maps are diffeomorphisms, and SSPs with the fiber a segment such that the fiber maps map the segment into itself and are diffeomorphisms on the image. For both classes we assume that the fiber maps preserve the orientation and the space of fiber maps is endowed with the $C^r$-topology (for some $r \ge 1$), which defines the topology on the space of SSPs. We shall say that a property is \emph{topologically generic} in one of these classes if it holds for a residual subset of SSPs in this class. 

\begin{itemize}
\item
\emph{For a topologically generic step skew product with the fiber a circle or a segment the statistical attractor is Lyapunov stable and coincides with the Milnor attractor (Theorem~\ref{t:stable} and Corollary~\ref{c:interval}).}

\item[] 
A similar statement for smooth skew products over Anosov diffeomorphisms is proved in~\cite{Oku}. Note that it is unknown whether the Milnor attractor is (generically) asymptotically stable even for step skew products with the fiber a segment. 

\item
\emph{For an open and dense set of SSPs with the fiber a segment the Milnor attractor is the union of the closures of the graphs of some almost everywhere defined functions from the base to the fiber (Theorem~\ref{AM_thm}).} 
These graphs were introduced in~\cite{KV}. It also follows from~\cite{KV} that the statistical attractor equals the same union. If the statistical attractor is Lyapunov stable, it can be proved that it coincides with the Milnor attractor. But we prove Lyapunov stability not for an open and dense set of SSPs, but only for a residual one.
\end{itemize}

\paragraph{SSPs with arbitrary fiber.}
To prove that attractors are stable we use the following general properties of attractors of SSPs which may be of independent interest. These properties are proved for the general case: the fiberwise maps can be diffeomorphisms of any compact manifold, possibly with boundary. For manifolds with boundary we consider diffeomorphisms of those manifolds onto themselves.

\begin{itemize}
\item
\emph{The statistical or Milnor attractor of any step skew product over the Bernoulli shift can be reconstructed by its projection onto the fiber. Namely, the point is in the attractor if and only if the projection of its whole past semi-orbit lies in the projection of the attractor (Theorem~\ref{t:proj})).}

\item
\emph{For a step skew product the statistical attractor or the Milnor attractor is Lyapunov stable if and only if its projection onto the fiber is Lyapunov stable in the sense of Definition~\ref{d:stproj} (Theorem~\ref{stproj}).}

\end{itemize}

\section{Preliminaries} \label{s:definitions}

\subsection{Step skew products} \label{s:SKP}
Consider the set $\Sig = \{1, \dots, s\}^{\mathbb Z}$ of biinfinite sequences $\om = \dots \om_{-1} \om_{0} \om_{1} \dots $ of symbols $1, \dots, s$. 
For two distinct sequences $\om, \tilde\om\in \Sig$ define the distance between them as
\[
d(\om, \tilde\om) = 2^{-m}, \; m = \min\{|n|\colon \om_n\ne\tilde\om_n\}.
\]
Given $m\in\NN$ different integers $n_1, \dots, n_m$ and $m$ symbols $\alpha_1, \dots, \alpha_m$ one can define \emph{a cylinder} in $\Sig$ as follows:
\[
CS_{\alpha_1,\dots, \alpha_m}^{n_1,\dots, n_m} = \{\omega\in \Sig \mid \omega_{n_j} = \alpha_{j}, \; j = 1, \dots, m\}.
\]
Such cylinders generate a topology on $\Sig$, and therefore they also generate the corresponding Borel $\sigma$-algebra over $\Sig$.

The \emph{$(1/s, \dots, 1/s)$-Bernoulli measure} $\mu_\Sig$ on $\Sig$ is defined in the following way. First, define it on cylinders by the formula
\[
\mu_\Sig(CS_{\alpha_1,\dots, \alpha_m}^{n_1,\dots, n_m}) = 1/s^m;
\]
then continue it to the whole Borel $\sigma$-algebra, and, finally, continue $\mu_\Sig$ to the corresponding Lebesgue $\sigma$-algebra. Note that this measure is a probability measure.

\begin{rem}
Our proofs work for any Bernoulli measure provided that the probability of every symbol is positive.  For simplicity we restrict our arguments to the case of equiprobable symbols.
\end{rem}

The map
$\sigma\colon \Sig \to\Sig, \; (\sigma\om)_n = \om_{n+1},$
is called \emph{the Bernoulli shift}. It is not difficult to check that this is a homeomorphism that preserves measure $\mu_\Sig$.

\begin{defin}
\emph{A step skew product} (SSP for short) over the Bernoulli shift $(\Sig, \sigma)$ with the \emph{fiber} $M$ and the \emph{fiber maps} $f_1, \dots, f_s \colon M \to M$ is a map $F$ from a space $X=\Sig \times M$ to itself that has the following form: 
\begin{equation} \label{e:SKP}
F:X \rightarrow X,\; (\omega, p) \mapsto (\sigma \omega, f_{\omega_0}(p)).
\end{equation}
Here $\omega_0$ is the symbol at the zero position in the sequence $\omega$. 
\end{defin}

Here are a few notes regarding this definition.

\begin{enumerate}\setlength\itemsep{-0.1em}
\item The space $\SS$ is called \emph{the base} of the SSP, whereas $M$ is called \emph{the fiber}. In what follows, $M$ will be a compact manifold with Riemannian metric. 

\item The metric on $X$ is obtained as the sum of the distances along the fiber and along the base. 

\item On $X$ there is a measure $\mu_X$ obtained as the product of measure $\mu_\Sig$ on the base and the Lebesgue measure $\mu_M$ on the fiber. 

\item An SSP is uniquely determined by its fiber maps. Thus SSPs with the base $\Sig$, the fiber $M$ and $C^r$-smooth fiber maps form a metric space isomorphic to $(C^r(M))^s$. We will also work with various subsets of this space (e.g., $(\Diff^r(M))^s$) with the induced from $(C^r(M))^s$ topology.

\end{enumerate}

\subsection{Milnor and statistical attractors} \label{s:attractors}

Consider a dynamical system $(X, F)$ where $X$ is a separable metric space and $F:X \to X$  is a continuous map. Fix a finite Borel measure $\mu$ on $X$. In the case of SSPs later on $\mu$ will always be the product $\mu_X$ of the Lebesgue measure in the fiber and the Bernoulli measure in the base.

\begin{defin}[\cite{Mil}] 
\emph{The Milnor attractor} of a map $F$ is the smallest closed subset of $X$ that contains $\omega$-limit sets of $\mu$-a.e. points. 
\end{defin}

We will denote the Milnor attractor of a map $F$ by $A_M(F)$ or simply by $A_M$ if it is clear which map is considered.

\begin{defin}
\emph{The frequency} $Freq(x, U)$ with which the orbit of a point $x$ visits the set $U$ is the upper limit
\[
\limsup_{N \to +\infty} \frac 1 N \#\{n: F^n(x) \in U, \; 0 \le n < N \}.
\]
\end{defin}
\begin{defin}
\emph{The statistical $\omega$-limit set} of a point $x\in X$ (notation: $\oms(x)$) is the set of points $z \in X$ such that for any neighborhood $U$ of $z$ one has $Freq(x, U)>0$.
\end{defin}

The statistical attractor is defined exactly like the Milnor attractor, but with statistical $\omega$-limit sets instead of the regular ones. 
\begin{defin}[\cite{AAISh}, $\S8.2$; see also~\cite{GI}] 
\emph{The statistical attractor} is the smallest closed subset of $X$ that contains $\oms(x)$ for $\mu$-a.a. points $x\in X$. Notation:  $\As(F)$ or $\As$.
\end{defin}
The definition of the statistical attractor in~\cite{AAISh} is slightly different from the one we gave, but is equivalent to it.

The existence of Milnor attractors is proved in~\cite[Lemma~$1$]{Mil} for the case when $F$ is a continuous map of a compact manifold to itself and $\mu$ is the Lebesgue measure on this manifold. The existence of the Milnor and statistical attractor for a skew product with measure $\mu_X$ is proved in exactly the same way (provided that the fiber is compact).
Milnor and statistical attractors are both forward invariant (and also backward invariant if~$F$ is a homeomorphism), because for any point~$x$ the sets $\om(x)$ and $\oms(x)$ are invariant. 

\begin{rem} \label{p:defAM}
A point $x$ belongs to $A_M$ iff for any its neighborhood $U \ni x$ there is a positive measure set of points $y$ such that $\omega(y)$ intersects~$U$. The same is true for $\As$, but the regular $\omega$-limit sets are replaced by the statistical ones.
\end{rem}

\subsection{Maximal attractors and Lyapunov stability} \label{s:Amax}

We will also use the definition of a maximal attractor.

\begin{defin}
Let $U$ be an absorbing domain for the map~$F$, i.e., an open set such that $\overline{F(U)}~\subset~U$. \emph{The maximal attractor} in the domain~$U$ is the set 
$$A_{max}(F, U) = \bigcap \limits_{n=1}^\infty F^n(U).$$ 
\end{defin}

The attractor of the inverse map is called \emph{the repeller}.

The following (a priori non-strict) inclusions always hold:
$$A_{stat}(F) \subset A_M(F) \subset A_{max}(F, X).$$
The first one follows from the fact that for any point $x$ one has $\oms(x)\subset\om(x)$. The second one holds because the maximal attractor of the dissipative domain~$U$ always includes $\om(x)$ for any $x\in U$. 

\begin{defin}
A set $A$ is called \emph{absorbing} for a map $F$ if $F(A) \subset A$.
\end{defin}

\begin{defin}
An invariant or absorbing closed subset $M$ of the phase space $X$ of the system $(X, F)$ is called \emph{Lyapunov stable} if for any its neighborhood~$U$ there exists a smaller neighborhood~$V$ of $M$ such that (positive semi-) trajectories that start inside~$V$ never quit~$U$.
\end{defin}

\begin{defin} \label{d:stproj}
For a skew product of type~\eqref{e:SKP} a closed subset of the fiber $B \subset M$ will be called \emph{Lyapunov stable} if the set $\Sigma^s \times B$ is Lyapunov stable.
\end{defin}

It follows from the definition of the maximal attractor that it is always Lyapunov stable. At the same time Milnor and statistical attractors can be unstable: an example is given by a map of a circle with a unique fixed point which is semi-stable, e.g.,
$$
x \mapsto x + 0.1 (1-\cos x), \; x \in \mathbb R/2\pi\mathbb Z. 
$$
For an arbitrary $x$ one has $\om(x) = \oms(x) = \{0\}$; hence $A_M = \As = \{0\}$. However, on one of the two sides the points run away from zero, which makes the attractors Lyapunov unstable. 

\section{Milnor attractors of SSPs with a segment fiber}\label{sect:KV}
\subsection{Preliminaries and the statement of the result}
  The results of this section are true for a wider class of skew products than the one defined in section~\ref{s:SKP}; namely, they are true for step skew products over a transitive topological Markov chain~$(\SS, \sigma)$ with finitely many states. In this case one should fix on the base an ergodic Markov measure~$\mu_\SS$ such that for this measure all admissible transitions have positive probability (see the exact definitions in~\cite[Sect.~2]{KV} or~\cite[\S 4.2 e]{KH}). However, the reader might as well assume that in this section we deal with SSPs over the Bernoulli shift with the Bernoulli measure on the base.

We are considering SSPs with the fiber a segment, i.e., maps of the form
\begin{equation}
F\colon X=\Sigma\times I \to X, \;
(\omega,x){\mapsto}(\sigma\omega,f_{\omega_0}(x)),\label{basic}
\end{equation}
where all fiber maps $f_{\omega_0}$ are orientation preserving diffeomorphisms of a segment onto its image.

\begin{defin}
A closed subset $K\subset X$ is called \emph{a bony graph} if it intersects $\mu_\SS$-a.e. fiber by a single point and intersects the rest of the fibers by a (non-degenerate) segment. Such segments are called \emph{bones}.
\end{defin}
Note that a bony graph can be viewed as a union of its bones and a graph of some almost everywhere defined map from the base to the fiber, hence the name. The Fubini theorem implies that the $\mu_X$-measure of a bony graph is zero.

\begin{defin}
A subset $\Pi \subset X$, bounded by two graphs of continuous mappings from the base to the fiber, is called \emph{a strip}. A strip is \emph{strictly trapping} if $\overline{F(\Pi)} \subset \Pi$ and \emph{strictly inverse trapping} if $\overline{\Pi} \subset F(\Pi)$.
\end{defin}

\begin{thm}[V.~A.~Kleptsyn, D.~S.~Volk, \cite{KV}]\label{KV}
There exists an open and dense (in any $C^r$-topology for $r \ge 1$) subset $\KV$  of the set of all step skew products of type~\eqref{basic} with the fiber a segment such that for any SSP $F \in \KV$ the following holds.
\begin{enumerate}\setlength\itemsep{-0.1em}
\item The phase space can by covered by a union of finitely many strictly trapping and strictly inverse trapping strips.

\item \label{bony_graph} The maximal attractor in every trapping strip is a bony graph; the same is true for the repellers inside the inverse trapping strips.

\item\label{SRB_measure} For every strip there is a unique ergodic invariant measure that projects to the Markov measure in the base. This measure may be obtained by lifting the Markov measure from the base to the maximal attractor (or repeller) of the strip, viewed as the graph of an almost everywhere defined measurable function. This measure is an SRB-measure inside the corresponding strip\footnote{i.e., for $\mu_X$-almost-every  point $p$ from the strip the positive semi-orbit is distributed according to this measure. Recall that the positive semi-orbit of $p$ is distributed according to a measure $\nu$ if the sequence of measures $\frac{1}{n}\sum_{j=1}^{n-1}\delta_{F^j(p)}$ weakly converges to $\nu$.};

\item\label{Lyapexp} The fiberwise Lyapunov exponents of those measures are negative for trapping strips and positive for inverse trapping ones.

\end{enumerate}
\end{thm}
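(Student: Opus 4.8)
The plan is to build the whole structure from the fiberwise random dynamics on the segment $I$, and to locate the open dense set $\KV$ by imposing hyperbolicity on the relevant invariant objects. First I would analyze the random walk generated by $f_1,\dots,f_s$ on $I$: the Markov operator $P\varphi(x)=\frac1s\sum_i\varphi(f_i(x))$ has stationary probability measures by compactness of $I$, and each ergodic stationary measure $\nu$ lifts to an $F$-invariant measure $\hat\nu$ on $X$ projecting to $\mu_\Sigma$. To each such $\nu$ I attach its fiberwise Lyapunov exponent $\lambda(\nu)=\frac1s\sum_i\int_I\log|f_i'|\,d\nu$. The central genericity step is to show that on an open dense set of skew products there are only finitely many ergodic stationary measures and all of them are hyperbolic ($\lambda(\nu)\neq 0$): density should come from perturbing the derivatives of the $f_i$ to destroy degenerate (zero-exponent) and non-isolated configurations, and openness from persistence of the hyperbolic structure under small $C^r$-perturbations. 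For a hyperbolic measure the one-dimensionality of the fiber forces its conditional measures on fibers to be a.e.\ Dirac, so $\hat\nu$ is carried by the graph of an a.e.-defined measurable function $\gamma_\nu\colon\Sigma\to I$ with $f_{\omega_0}(\gamma_\nu(\omega))=\gamma_\nu(\sigma\omega)$ --- a \emph{random fixed point}.

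Next I would establish the bony-graph structure, which is where the negative exponent does the real work. Since every $f_i$ maps $I$ into itself, for a fixed past $\omega^-=(\dots\omega_{-2}\omega_{-1})$ the intervals $I_n(\omega^-)=f_{\omega_{-1}}\circ\cdots\circ f_{\omega_{-n}}(I)$ are nested and decreasing, and their intersection $I_\infty(\omega^-)$ is exactly the fiber over $\omega$ of the maximal attractor of the whole space. The chain rule and the mean value theorem give $|I_n(\omega^-)|=|I|\prod_{k=1}^n|f'_{\omega_{-k}}(x_k)|$ for suitable points $x_k$, so along sequences whose forward statistics equidistribute to a stationary measure of negative exponent the length decays exponentially and $I_\infty(\omega^-)$ is a single point; by the Birkhoff theorem this happens for $\mu_\Sigma$-a.e.\ past, yielding the graph part. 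The exceptional sequences where contraction fails produce the bones, and the delicate point is to check that the resulting set is closed and meets the remaining fibers in genuine nondegenerate segments, so that it is honestly a bony graph. Restricting this analysis to the trapping region around an attracting $\gamma_\nu$ (respectively to the inverse dynamics around a repelling one) gives the maximal attractor (resp.\ repeller) of each strip, and conclusion~\eqref{Lyapexp} on the signs of the exponents is then immediate.

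To produce the strips themselves (conclusion~1) I would order the finitely many random fixed points $\gamma_\nu$ in the fiber $I$; since the $f_i$ preserve orientation this order is respected by the dynamics, and in the fiber order the attracting and repelling graphs alternate. A strip is then the region of $X$ trapped between two consecutive continuous graphs chosen just outside a repelling $\gamma_\nu$ (for a trapping strip, containing one attracting graph) or just outside an attracting one (for an inverse-trapping strip), with the outermost strips bounded by $\Sigma\times\partial I$; strict trapping $\overline{F(\Pi)}\subset\Pi$ follows from the hyperbolic push-in near the repellers, and finitely many such strips cover $X$ because there are finitely many graphs. Finally, for conclusion~\eqref{SRB_measure} I would take $\hat\nu$ itself as the candidate SRB-measure of the strip: it is the lift of $\mu_\Sigma$ to the attractor graph, it is ergodic because $(\Sigma,\sigma,\mu_\Sigma)$ is ergodic and the graph is a.e.\ single-valued, and the SRB property --- equidistribution of the $\mu_X$-a.e.\ forward orbit in the strip --- follows by combining the Birkhoff theorem in the base with the fiberwise synchronization guaranteed by the negative exponent, nearby fiber points being contracted onto the attractor graph.

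The main obstacle I expect is the genericity step: proving that finitely many ergodic stationary measures, all hyperbolic, occur on an open and dense set. Density requires a perturbation scheme that simultaneously removes zero exponents and collapses possible continua of stationary measures into isolated hyperbolic ones, while openness requires that the entire strip-and-graph architecture persists --- the strict trapping is robust, but one must verify that the number of stationary measures cannot jump and that the exponents stay bounded away from zero under $C^r$-perturbation. The secondary difficulty is the fine analysis of the exceptional set in the bony-graph step, ensuring closedness and genuine nondegeneracy of the bones rather than a merely measure-theoretic statement.
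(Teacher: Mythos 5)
This theorem is not proved in the paper at all: it is an external result of Kleptsyn and Volk, stated with attribution and used as a black box, and the authors explicitly defer even the definition of the open dense set $\KV$ to Section~5 of~\cite{KV}. So there is no internal proof to compare your attempt against; what follows is an assessment of your sketch on its own terms.

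Your high-level architecture (stationary measures of the fiberwise random walk, hyperbolicity of their exponents, negative exponent forcing the invariant lift to live on a measurable graph, strips separating attracting and repelling graphs) is a reasonable reconstruction, but three steps are genuinely missing rather than merely compressed. First, the claim you correctly identify as the main obstacle --- that on an open \emph{and} dense set there are finitely many ergodic stationary measures, all hyperbolic, with the whole architecture persisting --- is exactly the hard content of the theorem, and you state it as a target rather than prove it; openness is especially delicate because stationary measures need not vary continuously with the IFS, new ones can be created by perturbation, and the count can jump unless one imposes the explicit structural conditions (finite systems of trapping intervals for the IFS on $I$) by which \cite{KV} actually defines $\KV$. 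Second, the contraction argument for the bony graph is circular as written: the points $x_k$ produced by the mean value theorem in $|I_n(\omega^-)|=|I|\prod_k|f'_{\omega_{-k}}(x_k)|$ are not on the attractor graph, so Birkhoff's theorem for the stationary measure with negative exponent gives no a priori control of that product; one needs a bounded-distortion or stopping-time argument, or a direct estimate showing the maximal attractor has zero $\mu_X$-measure, to close the loop. (Conversely, the point you flag as delicate --- closedness and nondegeneracy of the bones --- is automatic, since each fiber of the maximal attractor is a nested intersection of compact intervals.) Third, the strips in the statement are bounded by \emph{continuous} graphs, while your random fixed points $\gamma_\nu$ are only measurable, with the attracting ones depending on the past half of $\omega$ and the repelling ones on the future half; the construction of continuous (in \cite{KV}, essentially locally constant) boundaries strictly separating consecutive graphs and satisfying $\overline{F(\Pi)}\subset\Pi$ is not supplied.
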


Note that the set $\KV$ is given by some explicit conditions. They can be found in section~5 of~\cite{KV}.

For $F \in \KV$, let us denote by $\Pi_i \; (i = 1,\dots, l)$ the dissipative strips and by $\Gamma_i \subset \Pi_i \; (i = 1,\dots, l)$ the graphs of almost everywhere defined functions from the base to the fiber which, together with the corresponding bones, form the maximal attractors of the dissipative strips.

\bigskip

\begin{thm}\label{AM_thm}
For any $F \in \KV$ the Milnor attractor is the closure of the union of the graphs~$\Gamma_i$.
\end{thm}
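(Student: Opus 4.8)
The plan is to establish the two inclusions $\overline{\bigcup_i\Gamma_i}\subseteq A_M(F)$ and $A_M(F)\subseteq\overline{\bigcup_i\Gamma_i}$ separately. The first inclusion is essentially free. By the general relation $\As(F)\subseteq A_M(F)$ recalled in Section~\ref{s:Amax}, it suffices to know that $\As(F)=\overline{\bigcup_i\Gamma_i}$, which follows from Theorem~\ref{KV}. Indeed, the SRB measure $\nu_i$ of item~\ref{SRB_measure} is obtained by lifting the base measure to $\Gamma_i$, so its support is exactly $\overline{\Gamma_i}$ (the equality relying, as below, on the fiber contraction); since $\mu_X$-a.e.\ point of $\Pi_i$ has its forward orbit equidistributed to $\nu_i$, such points satisfy $\oms(\cdot)=\operatorname{supp}\nu_i=\overline{\Gamma_i}$, and Remark~\ref{p:defAM} then gives $\overline{\Gamma_i}\subseteq\As(F)\subseteq A_M(F)$. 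Taking the finite union and closure yields the first inclusion.

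For the reverse inclusion it is enough, since $\overline{\bigcup_i\Gamma_i}$ is closed, to show that $\omega(p)\subseteq\overline{\bigcup_i\Gamma_i}$ for $\mu_X$-a.e.\ $p$. I would first reduce to a single strip: the basins of the $\nu_i$ cover a set of full measure, so for a.e.\ $p$ the forward orbit eventually enters some dissipative strip $\Pi_i$ (which is forward invariant, as $\overline{F(\Pi_i)}\subset\Pi_i$), equidistributes to $\nu_i$, and projects to a base point $\pi(p)$ with generic forward $\sigma$-orbit. In particular $\omega(p)$ is contained in the maximal attractor of $\Pi_i$, that is, in the bony graph $\Gamma_i\cup(\text{bones})$; the whole difficulty is to exclude the parts of the bones not already lying in $\overline{\Gamma_i}$.

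The core of the argument is a synchronization estimate coming from the negative fiberwise Lyapunov exponent of item~\ref{Lyapexp}. Writing $a_j=\pi(p)_j$ and $I_n=f_{a_{n-1}}\circ\dots\circ f_{a_0}(I)$ for the $n$-th forward image of the whole fiber, Birkhoff's theorem along the orbit (which equidistributes to $\nu_i$) gives $\tfrac1n\log\operatorname{diam}I_n\to\lambda_i<0$, whence $\operatorname{diam}I_n\to0$; here one passes from the Birkhoff average of $\log|f'|$ to the length of the image interval via standard bounded-distortion estimates. For $\mu_X$-a.e.\ $p$ the shifted base points $\sigma^n\pi(p)$ lie, for every $n\ge0$, in the full-measure set on which $\gamma_i$ is defined, and both the fiber coordinate of $F^n(p)$ and the attractor value $\gamma_i(\sigma^n\pi(p))$ belong to $I_n$, so their distance tends to $0$. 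Consequently any accumulation point $z=\lim_k F^{n_k}(p)$ is also the limit of the genuine graph points $(\sigma^{n_k}\pi(p),\gamma_i(\sigma^{n_k}\pi(p)))\in\Gamma_i$, giving $z\in\overline{\Gamma_i}$. This proves $\omega(p)\subseteq\overline{\Gamma_i}$ and hence the reverse inclusion.

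The main obstacle is precisely this last step: a priori the orbit, while converging to the bony graph, could accumulate on an ``exposed'' bone point (one not in $\overline{\Gamma_i}$) exactly at the zero-frequency times when $\sigma^n\pi(p)$ approaches a bone base-point. The synchronization estimate is what rules this out, by pinning the fiber coordinate at time $n$ to within $\operatorname{diam}I_n\to0$ of the well-defined value $\gamma_i(\sigma^n\pi(p))$. Making this quantitative requires care, since the negative Lyapunov exponent is only an average statement with respect to $\nu_i$: turning it into genuine contraction of $\operatorname{diam}I_n$ along the individual orbit of a.e.\ $p$ needs bounded distortion together with the equidistribution furnished by the SRB property, and one must verify that the exceptional, non-synchronizing base points are avoided along the entire forward orbit of a.e.\ $p$.
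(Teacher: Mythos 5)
Your first inclusion and the reduction to a single trapping strip match the paper's argument. The gap is in the synchronization step. You want $\operatorname{diam} I_n \to 0$ along the forward orbit of a.e.\ $p$, where $I_n$ is the image of the whole fiber of the strip under $f_{a_{n-1}}\circ\dots\circ f_{a_0}$, and you propose to get this from the Birkhoff average of $\log f'$ \emph{at the graph points} plus ``standard bounded-distortion estimates.'' This is circular: bounded distortion for a long composition of interval maps on an interval $J$ requires a priori control (typically summability) of the lengths of the intermediate images $g_k(J)$ --- which is exactly the conclusion you are after. The fiberwise Lyapunov exponent of Theorem~\ref{KV}, item~\ref{Lyapexp}, only controls the derivative at the point $x_A(\omega)$ on the graph; as long as $I_n$ is macroscopic it may contain points where the composition expands, and nothing forces $\log\operatorname{diam} I_n$ to track the Birkhoff sum taken at the graph. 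A secondary issue: the true statement that the strip widths $\psi_{1,n}-\psi_{2,n}$ tend to $0$ $\mu_{\Sigma}$-a.e.\ (which follows softly from the fact that the bony graph has zero $\mu_X$-measure, not from the Lyapunov exponent) does not give convergence along the \emph{moving} evaluation points $\sigma^n\pi(p)$: pointwise a.e.\ convergence of a sequence of functions says nothing about the values of the $n$-th function at the $n$-th point of an orbit.

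The paper repairs both problems with a two-stage argument that your outline does not contain. First (Proposition~\ref{p7}) it uses the zero-measure property of the bony graph, the Egorov theorem, and ergodicity of the base to show that a.e.\ orbit \emph{at some time} lands in a set $V=U_\gamma\cap(B\times I)$ where it is $\gamma$-close to the graph and, simultaneously, the future Birkhoff averages $K_n$ of $\log(f'(x_A)+\eps)$ are uniformly negative (this is the Egorov set $B$). Second (Proposition~\ref{p5} and Lemma~\ref{l:V}), once the orbit is within $\gamma$ of the graph it never leaves the $\beta$-neighborhood $U_\beta$ on which $|f'_j(x)-f'_j(x_A)|<\eps$, so the fiberwise distance to the graph --- not the width of $I_n$ --- contracts like $\exp(nK_n)\to 0$. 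The localization to $U_\beta$ is the paper's substitute for your bounded distortion, and it only becomes available after the orbit has been brought close to the graph by the soft measure-theoretic argument. So your outline identifies the right mechanism (negative exponent implies synchronization with the graph), but the step you yourself flag as ``requiring care'' is precisely where a different idea is needed, and as written the argument does not close.
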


All points of every inverse trapping strip, except the points of the corresponding maximal repeller, leave this strip under the iterates of~$F$ and enter one of the trapping strips. Since bony graphs have zero measure, for almost all points of the phase space their positive semi-orbits enter one of those trapping strips. Therefore, it suffices to consider an arbitrary trapping strip $\Pi \in \{\Pi_i\}_{i=1}^n$ and its attracting graph $\Gamma \in \{\Gamma_i\}$ of the almost everywhere defined function $\phi$ from the base to the fiber and prove the following statement.

\begin{lem}\label{AM_stripe}
For any $F \in \KV$ for any $\Pi \in \{\Pi_i\}_{i=1}^n$ the Milnor attractor of the restriction $F|_\Pi$ is the closure of the graph~$\Gamma$.
\end{lem}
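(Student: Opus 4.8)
The plan is to show that for $\mu_X$-almost every point $p=(\omega,x)\in\Pi$ its $\omega$-limit set is exactly $\overline{\Gamma}$; since $A_M(F|_\Pi)$ is the smallest closed set containing $\omega(p)$ for a.e.\ $p$, this yields $A_M(F|_\Pi)=\overline{\Gamma}$ immediately. The inclusion $\omega(p)\supseteq\overline{\Gamma}$ I would get from the SRB measure $\nu$ of item~\ref{SRB_measure} of Theorem~\ref{KV}: being the lift of the base measure to the graph, $\nu$ has support exactly $\overline{\Gamma}$, and by the SRB property the empirical measures $\frac1n\sum_{j<n}\delta_{F^jp}$ converge weakly to $\nu$ for a.e.\ $p$. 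Hence every point of $\operatorname{supp}\nu$ is visited with positive frequency, so infinitely often, giving $\omega(p)\supseteq\operatorname{supp}\nu=\overline{\Gamma}$ (this is also just the quoted equality $A_{stat}(F|_\Pi)=\overline{\Gamma}$ together with $A_{stat}\subseteq A_M$).

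The real content is the reverse inclusion $\omega(p)\subseteq\overline{\Gamma}$, i.e.\ ruling out accumulation of typical orbits on the interiors of the bones. Write $f^{(n)}_\omega=f_{\omega_{n-1}}\circ\dots\circ f_{\omega_0}$ and let $\Pi_\omega$ be the fiber of $\Pi$ over $\omega$; the decisive quantity is the length $L_n(\omega)=\bigl|f^{(n)}_\omega(\Pi_\omega)\bigr|$ of the fiber of $F^n(\Pi)$ over $\sigma^n\omega$. I claim the whole inclusion reduces to showing $L_n(\omega)\to0$ for $\mu_\Sigma$-a.e.\ $\omega$. Indeed, both the orbit value $x_n=f^{(n)}_\omega(x)$ and the graph value $\phi(\sigma^n\omega)=f^{(n)}_\omega(\phi(\omega))$ (the latter by forward invariance of $\Gamma$) lie in that single fiber, so $|x_n-\phi(\sigma^n\omega)|\le L_n(\omega)\to0$; then for any convergent subsequence $\sigma^{n_k}\omega\to\omega'$, $x_{n_k}\to y$, the genuine graph points $(\sigma^{n_k}\omega,\phi(\sigma^{n_k}\omega))$ converge to $(\omega',y)$, forcing $(\omega',y)\in\overline{\Gamma}$. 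Beforehand I would discard a $\mu_\Sigma$-null set of $\omega$ so that $\sigma^n\omega$ is never a bone base point for any $n$ — the bone base points form a null set, and a countable union of its shift-preimages is still null — which guarantees that $\phi(\sigma^n\omega)$ is a bona fide point of $\Gamma$ for every $n$.

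To establish $L_n(\omega)\to0$ I would telescope its logarithm, $\log L_n(\omega)-\log L_0(\omega)=\sum_{j=0}^{n-1}\log\bigl\langle|f'_{\omega_j}|\bigr\rangle_{f^{(j)}_\omega(\Pi_\omega)}$, where $\langle\,\cdot\,\rangle_J$ is the average over the interval $J$. Replacing each average by the value of $|f'_{\omega_j}|$ at one orbit point of a $\mu_X$-generic $(\omega,c)$ turns the main sum into $\log\bigl|(f^{(n)}_\omega)'(c)\bigr|$, which by the SRB property and item~\ref{Lyapexp} of Theorem~\ref{KV} equals $n\lambda+o(n)$ with $\lambda=\int\log|f'_{\omega_0}(x)|\,d\nu<0$ the negative fiber Lyapunov exponent; the replacement error is controlled by the modulus of continuity of $\log|f'|$ on the intervals $f^{(j)}_\omega(\Pi_\omega)$, hence by the numbers $L_j(\omega)$ themselves. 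The main obstacle is precisely to show this error is sublinear, i.e.\ $\frac1n\sum_{j<n}L_j(\omega)\to0$ for a.e.\ $\omega$; here I would use that the fibers of $F^n(\Pi)$ are nested to bound $L_j(\omega)\le h_m(\sigma^j\omega)$ with $h_m(\eta)=|\,$fiber of $F^m(\Pi)$ over $\eta\,|$ for any fixed $m\le j$, apply Birkhoff's theorem on the base to the fixed observable $h_m$ to obtain $\limsup_n\frac1n\sum_{j<n}L_j(\omega)\le\int h_m\,d\mu_\Sigma=\mu_X(F^m(\Pi))$, and then let $m\to\infty$, using $\mu_X(F^m(\Pi))\downarrow\mu_X(A_{max})=0$ since the bony graph has zero measure. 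Combining the pieces gives $\log L_n(\omega)=n\lambda+o(n)\to-\infty$, so $L_n(\omega)\to0$, completing the proof. I expect the only genuinely delicate point to be this interchange of Cesàro-averaging with the distortion error, which is what lets the argument avoid any bounded-distortion hypothesis beyond $C^1$ smoothness of the fiber maps.
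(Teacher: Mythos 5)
Your proposal is correct, and its first half (getting $\overline{\Gamma}\subset A_M$ from the SRB measure of item~\ref{SRB_measure} of Theorem~\ref{KV} and $\As\subset A_M$) is exactly the paper's argument. For the hard inclusion $\om(p)\subset\overline{\Gamma}$ you take a genuinely different route. The paper works locally near the graph: it fixes $\eps$ with $\int\ln(f'_\om(x_A(\om))+\eps)\,d\mu_\SS<0$, uses uniform continuity of the $f'_i$ to get a width $\beta$ within which the fiber derivative is bounded by $f'_{\om_0}(x_A(\om))+\eps$, applies Birkhoff plus Egorov to the time averages of $\ln(f'_\om(x_A(\om))+\eps)$ to produce a positive-measure ``Egorov set'' $V=U_\gamma\cap(B\times I)$ whose points never leave $U_\beta$ and converge to $\Gamma$, and then shows that almost every orbit enters $V$ by a second application of Egorov (to the widths $\psi_{1,n}-\psi_{2,n}$ of the iterated strip) combined with ergodicity of the base. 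You instead prove the stronger statement that for a.e.\ $\om$ the whole fiber $f^{(n)}_\om(\Pi_\om)$ of $F^n(\Pi)$ over $\sigma^n\om$ shrinks to a point: telescoping $\log L_n$, comparing with the Birkhoff sum of $\log f'$ along a single SRB-generic orbit (which is $n\lambda+o(n)$ with $\lambda<0$ by items~\ref{SRB_measure} and~\ref{Lyapexp}), and controlling the distortion error by the Ces\`aro averages of the $L_j$, which you annihilate via the nestedness bound $L_j\le h_m\circ\sigma^j$, Birkhoff for each fixed $h_m$, and $\mu_X(F^m(\Pi))\downarrow\mu_X(A_{max})=0$. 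This buys uniform convergence over entire fibers and dispenses with both Egorov steps and the recurrence argument, at the price of slightly more delicate bookkeeping: you should spell out that $\frac1n\sum_{j<n}\eta(L_j)\to0$ follows from $\frac1n\sum_{j<n}L_j\to0$ (split the indices according to whether $L_j$ exceeds a threshold $\delta$ and use that the modulus of continuity $\eta$ of the $\log f'_i$ is bounded and vanishes at $0$), and note that the full-measure set of $\om$ must simultaneously satisfy Birkhoff for all $h_m$, carry an SRB-generic point $c\in\Pi_\om$, and avoid all shift-preimages of the bone base points. Both proofs ultimately rest on the same two inputs from Theorem~\ref{KV}: the negative fiberwise Lyapunov exponent and the zero measure of the bony graph.
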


\subsection{Plan of the proof of Lemma~\ref{AM_stripe}}
Let us show that the closure of the graph $\Gamma$ is contained in the Milnor attractor.
Property~\ref{SRB_measure} of Theorem~\ref{KV} says that the closure of $\Gamma$ is the support of the $SRB$-measure according to which the orbits of almost all points of the strip are distributed. It follows that for almost all points $x\in\Pi$ one has $\oms(x) = \overline{\Gamma}$, and therefore $\overline{\Gamma} = A_{stat}\subset A_M$.

It remains to show that almost all points of the strip are attracted to the graph~$\Gamma$. First, using the negative fiberwise Lyapunov exponent (property~(\ref{Lyapexp} in Theorem~\ref{KV}), we will find a set $V\subset\Pi$ of positive measure such that every point in this set is attracted to the graph (Lemma~\ref{l:V} below). The set $V$ is obtained with the help of the Egorov theorem, and we will call it ``the Egorov set'' sometimes. To construct this set, we will consider the subsets $U_\alpha\subset \Pi$ that are covered when one moves the graph~$\Gamma$ up and down along the fiber to a distance up to~$\alpha > 0$. The set~$V$ will have the form $V = U_\alpha\cap(B\times I)$, where $\alpha$ is sufficiently small and $B\subset \SS$ is a subset of measure~$1-\delta$.

Then we will show that almost all points of the strip~$\Pi$ visit the set~$V$ (Proposition~\ref{p7}). This is proved in the following way. The projection of the set~$V$ onto~$\Sigma$ coincides with~$B$ and has measure $1-\delta$. In addition to that, we will show (using property~\ref{bony_graph} in Theorem~\ref{KV}) over some set $C\subset\Sigma$ of measure $1-\delta$ the images of the boundaries of our trapping strip converge uniformly to each other. Fix a number $N$ such that for every $n > N$ the distance along the fiber between the $n$-th images of the boundaries of the stripe is less than $\alpha$ over $C$. Then, since $V = U_\alpha\cap(B\times I)$ and $\Gamma\subset F^n(\Pi)$, we will have
\begin{equation} \label{e:B-cap-C}
F^n(\Pi) \cap ((B\cap C)\times I) \subset V.
\end{equation}
Almost all points of the strip~$\Pi$ visit the set $((B\cap C)\times I)\cap\Pi$ infinitely many times. If the number of the iterate for which the visit happens is greater than $N$, the point finds itself inside~$V$ by~\eqref{e:B-cap-C}. Hence, almost every point of the strip~$\Pi$ is attracted to the closure of the graph~$\Gamma$, that is, $A_M(F|_{\Pi}) \subset \overline{\Gamma}$. The inverse inclusion was already proved above, therefore $A_M(F|_{\Pi}) = \overline{\Gamma}$. Now we proceed to the detailed proof.

\subsection{Constructing the Egorov set \texorpdfstring{$V$}{V}}

Denote by~$X_\Gamma$ the union of those fibers that contain a point of the graph~$\Gamma$. Let $\rho$ be the almost everywhere defined function from~$X_\Gamma$ to $\mathbb R$ that gives the fiberwise distance from its argument to the graph~$\Gamma$:
$$\mbox{for}\; p=(\omega,x)\in X_\Gamma \; \; \rho(p):={\rm dist}(p, \Gamma\cap(\{\omega\}\times I)).$$
One can also regard~$\rho$ as a function that is defined almost everywhere on~$X$.

Consider a family of sets $\mathcal U = \{U_{\alpha}\}_{\alpha\in\mathbb{R}_{+}}$ such that $U_{\alpha}$ is defined as follows:
$$U_\alpha = \{p\in X_\Gamma\mid \rho(p) \le \alpha\}.$$
In other words, the set $U_\alpha$ is covered when one moves the graph~$\Gamma$ up and down along the fiber by a distance up to~$\alpha$. In what follows we will consider only small~$\alpha$ for which one has $U_{\alpha}\subset \Pi.$

\begin{lem}\label{l:V}
For arbitrarily small~$\delta>0$ there are a set $B\subset\SS$ of measure $1-\delta$ and a number $\gamma>0$ such that for for any point~$p$ in the set $V = U_\gamma\cap(B\times I)$ one has $\om(p)\subset\overline{\Gamma}.$ 
\end{lem}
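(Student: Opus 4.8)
The plan is to exploit the negative fiberwise Lyapunov exponent of the SRB measure supported on $\overline{\Gamma}$ (property~\ref{Lyapexp} of Theorem~\ref{KV}) to obtain a uniform accumulated contraction toward the graph, and then to upgrade this into genuine convergence by a bootstrapping argument that controls how far the orbit can drift from $\Gamma$. Write $\phi$ for the almost everywhere defined function whose graph is $\Gamma$, fix a uniform bound $L$ with $L^{-1}\le|f_j'|\le L$ on $I$ for all $j$, and set $\psi(\omega):=\log|f'_{\omega_0}(\phi(\omega))|$, a bounded function defined $\mu_\SS$-a.e. Since the SRB measure $\nu$ is the lift of $\mu_\SS$ to $\Gamma$ (property~\ref{SRB_measure}), its fiberwise Lyapunov exponent equals $\lambda=\int\psi\,d\mu_\SS$, and $\lambda<0$ by property~\ref{Lyapexp}. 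By ergodicity of $\mu_\SS$ and the Birkhoff theorem, $\frac1n\sum_{k=0}^{n-1}\psi(\sigma^k\omega)\to\lambda$ for $\mu_\SS$-a.e.\ $\omega$. Applying Egorov's theorem to this convergence yields a set $B_0$ with $\mu_\SS(B_0)\ge1-\delta$ and a number $N$ such that $\sum_{k=0}^{n-1}\psi(\sigma^k\omega)\le n\lambda/2$ for every $\omega\in B_0$ and $n\ge N$. I would then take $B$ to be the intersection of $B_0$ with the (conull) set of $\omega$ for which $\sigma^n\omega$ lies in the domain of $\phi$ and the graph invariance $\phi(\sigma^{n+1}\omega)=f_{\omega_n}(\phi(\sigma^n\omega))$ holds for all $n\ge0$; then $\mu_\SS(B)\ge1-\delta$.

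The crucial structural point is that Egorov controls the \emph{entire} product $\prod_{k=0}^{n-1}f'_{\omega_k}(\phi(\sigma^k\omega))=\exp\big(\sum_{k<n}\psi(\sigma^k\omega)\big)$ starting from the fixed base point $\omega\in B$, so the orbit $\sigma^k\omega$ is allowed to leave $B$ — we never need it to return. The contraction estimate comes next. Denote the orbit of $p=(\omega,x)$ by $(\sigma^n\omega,x_n)$. By the mean value theorem applied to the composition $f_{\omega_{n-1}}\circ\cdots\circ f_{\omega_0}$ together with the invariance of $\Gamma$,
\[
x_n-\phi(\sigma^n\omega)=\Big(\prod_{k=0}^{n-1}f'_{\omega_k}(\xi_k)\Big)\,(x-\phi(\omega)),
\]
where each $\xi_k$ lies between $x_k$ and $\phi(\sigma^k\omega)$. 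Fix $\eps_0<-\lambda/4$ and let $\gamma_0$ be a modulus-of-continuity threshold, uniform over the finitely many maps $f_j$, so that $|\xi-\phi(\sigma^k\omega)|\le\gamma_0$ forces $\bigl|\log|f'_{\omega_k}(\xi)|-\psi(\sigma^k\omega)\bigr|<\eps_0$. As long as $|x_k-\phi(\sigma^k\omega)|\le\gamma_0$ for all $k<n$ (which bounds the intermediate points $\xi_k$ as well), the displayed identity together with the Egorov bound gives $|x_n-\phi(\sigma^n\omega)|\le\gamma\,e^{n\lambda/2+n\eps_0}\le\gamma\,e^{n\lambda/4}$ for $n\ge N$.

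This is where the one genuine difficulty appears: the contraction estimate presupposes that the orbit has already stayed within $\gamma_0$ of the graph, which is exactly what we are trying to prove — the circular dependence typical of stable-manifold arguments. I would resolve it by choosing $\gamma\le L^{-N}\gamma_0$ and running an induction on $n$. For $n\le N$ the crude Lipschitz bound $|x_n-\phi(\sigma^n\omega)|\le L^{n}\gamma\le\gamma_0$ keeps the orbit inside the $\gamma_0$-neighborhood; for $n>N$ the contraction estimate gives $|x_n-\phi(\sigma^n\omega)|\le\gamma e^{n\lambda/4}\le\gamma\le\gamma_0$, which closes the induction and at the same time yields $|x_n-\phi(\sigma^n\omega)|\to0$. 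Finally, to conclude $\om(p)\subset\overline{\Gamma}$, take any limit point $(\omega^*,y^*)$ of the orbit along a subsequence $n_j$; since $|x_{n_j}-\phi(\sigma^{n_j}\omega)|\to0$, the graph points $(\sigma^{n_j}\omega,\phi(\sigma^{n_j}\omega))\in\Gamma$ converge to $(\omega^*,y^*)$ as well, so $(\omega^*,y^*)\in\overline{\Gamma}$. Setting $V=U_\gamma\cap(B\times I)$ then proves the lemma.
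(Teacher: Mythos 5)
Your proposal is correct and follows essentially the same route as the paper's proof: negative fiberwise exponent plus Birkhoff and Egorov to control the ergodic sums on a set $B$ of measure $1-\delta$, uniform continuity of the derivatives to pass from the derivative at the graph to the derivative at nearby points, a crude Lipschitz bound $L^{N}\gamma\le\gamma_0$ to survive the first $N$ iterates, and a bootstrap showing the orbit never leaves the $\gamma_0$-neighborhood thereafter. The only (immaterial) differences are that the paper perturbs the derivative additively ($f'+\varepsilon$) rather than the logarithm, and keeps the orbit in $U_\gamma$ via $K_n(\omega)<0$ rather than via the explicit decay $\gamma e^{n\lambda/4}\le\gamma$.
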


\begin{proof}
By property~\ref{Lyapexp} from Theorem~\ref{KV}, the fiberwise Lyapunov exponent~$L$ of the ``attracting'' SRB-measure on $\overline{\Gamma}$ is negative. Since this measure is obtained by lifting the measure~$\mu_\SS$ onto $\Gamma$, we have 
 $$L=\int\limits_{\Sigma}{}{\rm ln}f_{\omega}'(x_A(\omega))d\mu_\SS,$$
where $x_A(\omega)$ is the~$x$-coordinate of the intersection of the graph~$\Gamma$ with the fiber $\{\omega\}\times I.$ Note that the function we want to integrate is defined almost everywhere on~$\Sigma$ and is bounded, and thus integrable.

Fix a small number $\varepsilon > 0$ such that
$$L_{\varepsilon}:=\int\limits_{\Sigma}{}{\rm ln}(f_{\omega}'(x_A(\omega))+\varepsilon)d\mu_\SS < 0.$$

Let $g_\varepsilon(\omega):= f_{\omega}'(x_A(\omega))+\varepsilon$. Denote by $K_n(\om)$ the time averages of the function~$\ln g_\eps$ at a point $\om\in\SS$:
$$
K_n(\omega):=\frac{1}{n}\sum_{k=0}^{n-1}\ln g_\eps(\sigma^k\omega) = \frac{1}{n}\ln\prod_{k=0}^{n-1}g_\eps(\sigma^k\omega).
$$

Since the fiber is compact, all fiber maps are uniformly continuous. Their number is finite, therefore given the number $\e$ from above one can find a number $\beta=\beta(\varepsilon)>0$ such that for any $x_1, x_2\in I$ and~$j$ the following implication holds\begin{equation}\label{imp}
 {\rm dist}(x_1,x_2)<\beta \; \Rightarrow \; |f_{j}'(x_1)-f_{j}'(x_2)|<\varepsilon.
\end{equation}
Fix this~$\beta$ and consider the set $U_\beta\in \mathcal U$. If the positive semi-orbit  of a point of the phase space lies inside~$U_{\beta},$ we can estimate the speed with which it approaches the graph.

\begin{prop}\label{p5}
If the point $p = (\omega, x)$ and its images under the first $n-1$ positive iterates of~$F$ lie inside~$U_{\beta}$, then
\begin{equation}\label{est}
\rho(F^n(p))\le \rho(p)\cdot\prod_{k=0}^{n-1}g_\varepsilon(\sigma^k\omega) = \rho(p)\cdot\exp(nK_n(\om)).
\end{equation}
\end{prop}

\begin{proof}
Suppose $p=(\omega, x) \in U_\beta.$ Let $x_A(\om)$, as above, be the~$x$-coordinate of the point of the graph~$\Gamma$ inside the fiber over $\omega$. Then~\eqref{imp} implies that for $t\in [x_A(\omega)-\beta, \; x_A(\omega)+\beta]$ we have $f'_{\omega_0}(t) < f_{\omega_0}'(x_A(\omega))+\varepsilon$, which means that we can write the estimate
$$\rho(F(p)) = |f_{\omega_0}(x) - f_{\omega_0}(x_A(\omega))| \le |x - x_A(\omega)|\cdot(f_{\omega_0}'(x_A(\omega))+\eps) =\rho(p)\cdot g_\eps(\omega).$$

Similarly, if for every $j\in\{1,\dots, n-1\}$ we have $F^j(p)\in U_\beta$, this yields an estimate
$$\rho(F^n(p))\le \rho(F^{n-1}(p))\cdot g_{\e}(\sigma^{n-1}\omega) \le $$ $$\le\rho(F^{n-2}(p))\cdot g_{\e}(\sigma^{n-2}\omega)\cdot g_{\e}(\sigma^{n-1}\omega)\le\dots\le \rho(p)\cdot\prod_{k=0}^{n-1}g_\varepsilon(\sigma^k\omega).$$
\end{proof}

Now we can finish the proof of Lemma~\ref{l:V}.

By Birkhoff's theorem, for $\mu_\SS$-a.e. points $\omega\in\SS$ one has ${K_n(\omega)\to L_{\varepsilon}}$ as~$n\to\infty.$
Take an arbitrary $\delta\in (0, \frac{1}{2})$. Applying the Egorov theorem\footnote{The Egorov theorem says that if on a space~$\Sigma$ with a probability measure there is an almost everywhere convergent sequence of measurable functions, then for any $\delta>0$ there exists a set $B\subset\Sigma$ of measure at least $1-\delta$ such that this sequence converges uniformly on $B$.} to the sequence~$K_n$, we obtain a set $B\subset\Sigma$ of measure $1-\delta$ such that on $B$ this sequence uniformly converges. This set $B$ is the one that appears in the statement of Lemma~\ref{l:V}. The uniform convergence of $K_n$ to $L_\eps$ on $B$ and inequality $L_\eps < 0$ imply that there is an integer $M\in\mathbb N$ such that for any~$n\ge M, \; \omega\in B$ we have $K_n(\omega)<0.$

Let $\lambda$ be a constant such that all fiber maps are~$\lambda$-Lipschitz. Choose $\gamma$ to be so small that $\gamma\cdot \lambda^M < \beta.$ Now let us show that for any point $p\in V = U_\gamma\cap(B\times I)$ we have $\omega(p)\subset\overline{\Gamma}$.

Indeed, for a point $p = (\om, x)\in V$ we have $\rho(p)<\gamma.$ Then for $n\le M$ the following inequality holds $$\rho(F^n(p))<\gamma\cdot \lambda^M < \beta.$$ 
This means that during the first~$M$ iterates the images of the point~$p$ will not leave~$U_{\beta}$. Hence we can use estimate~\eqref{est}. Since $K_M(\omega)<0$ when $\omega\in B,$ we have: 
$$\rho\left(F^M(p)\right)\le \rho(p)\cdot \exp(M\cdot K_M(\omega))\le\rho(p),$$
i.e., the $M-$th image of the point~$p$ also lies in~$U_{\gamma}.$ Since for any $n\ge M$ we have $K_n(\omega)<0$, the subsequent images of~$p$ also stay in~$U_{\gamma}$, where estimate~\eqref{est} is applicable. Thus, for any $n>0$ we have
$\rho(F^n(p)) \le \rho(p)\cdot \exp(n\cdot K_n(\omega))$. Since $K_n(\omega)\to L_{\varepsilon} < 0$ as $n\to\infty$, we conclude that $\rho(F^n(p))\to 0$.
This means that $\omega(p)\subset \overline{\Gamma}$.

The proof of Lemma~\ref{l:V} is complete. 
\end{proof}

\subsection{Almost all points enter the Egorov set \texorpdfstring{$V$}{V}}

\begin{prop}\label{p7}
For almost every point~$x\in\Pi$ there exists a positive integer~$n$ such that $F^n(x)\in V$.
\end{prop}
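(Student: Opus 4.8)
The plan is to combine two independent facts: the fibrewise collapse of the forward images of the strip $\Pi$ onto the graph $\Gamma$ (a consequence of the bony-graph structure in Theorem~\ref{KV}) and the recurrence of $\mu_\SS$-almost every base point to the set $B\cap C$ under the shift. First I would control the width of the strip images. For each $\eta\in\Sigma$ the slice $F^n(\Pi)\cap(\{\eta\}\times I)$ is a closed interval $J_n(\eta)$ whose endpoints are the $n$-th images of the two continuous boundary graphs of $\Pi$; since $\overline{F(\Pi)}\subset\Pi$ these intervals are nested decreasing, and $\bigcap_n J_n(\eta)$ is the slice of the maximal attractor $A_{max}(F,\Pi)$ over $\eta$. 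By property~\ref{bony_graph} of Theorem~\ref{KV} this maximal attractor is a bony graph, so for $\mu_\SS$-a.e.\ $\eta$ (those not carrying a bone, where $\phi$ is defined) the intersection is the single point $(\eta,x_A(\eta))\in\Gamma$; hence the width $w_n(\eta):=|J_n(\eta)|$ tends to $0$ for $\mu_\SS$-a.e.\ $\eta$. The $w_n$ are measurable (on each base cylinder the symbols $\eta_{-1},\dots,\eta_{-n}$ are fixed, so the boundary images depend continuously on $\eta$), so Egorov's theorem yields a set $C\subset\Sigma$ with $\mu_\SS(C)=1-\delta$ on which $w_n\to 0$ uniformly; I fix $N$ with $w_n(\eta)<\gamma$ for all $n>N$ and $\eta\in C$, where $\gamma$ and $B$ are the objects produced by Lemma~\ref{l:V}.

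Next I would derive the key inclusion. Since $\Gamma\subset A_{max}(F,\Pi)\subset F^n(\Pi)$ for every $n$, the point $x_A(\eta)$ lies in $J_n(\eta)$; for $\eta\in C$ and $n>N$ this interval has length $<\gamma$ and contains $x_A(\eta)$, so every point of $J_n(\eta)$ is within fibrewise distance $\gamma$ of $\Gamma$, i.e.\ lies in $U_\gamma$. Intersecting with the condition $\eta\in B$ gives
\[
F^n(\Pi)\cap\bigl((B\cap C)\times I\bigr)\subset U_\gamma\cap(B\times I)=V \qquad\text{for all } n>N.
\]

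Finally I would invoke recurrence. As $\delta<\tfrac12$ we have $\mu_\SS(B\cap C)\ge 1-2\delta>0$, so by ergodicity of the Bernoulli shift (Birkhoff's theorem) one has $\sigma^n\omega\in B\cap C$ for infinitely many $n$, in particular for some $n>N$, for $\mu_\SS$-a.e.\ $\omega$; by Fubini the same holds for $\mu_X$-a.e.\ $x=(\omega,y)\in\Pi$. For such a point and such an $n$, the base coordinate of $F^n(x)$ is $\sigma^n\omega\in B\cap C$, while $F^n(x)\in F^n(\Pi)$ because $x\in\Pi$; the displayed inclusion then forces $F^n(x)\in V$, which is the assertion.

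The step I expect to be the main obstacle is the first one: showing that the fibre widths $w_n$ really shrink to $0$ almost everywhere, that is, that the nested slices collapse onto the single graph point over $\mu_\SS$-a.e.\ fibre rather than onto a bone. This is precisely where the bony-graph conclusion of Theorem~\ref{KV} (the bones sit over a $\mu_\SS$-null set) is indispensable, and it also requires enough measurability of $w_n$ to apply Egorov. Once this is secured, the remaining recurrence-and-Fubini argument is routine.
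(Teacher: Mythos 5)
Your proposal is correct and follows essentially the same route as the paper: Egorov applied to the a.e.\ collapse of the fibrewise widths of $F^n(\Pi)$ (justified via the bony-graph property of the maximal attractor), the inclusion $F^n(\Pi)\cap((B\cap C)\times I)\subset V$ for $n>N$, and recurrence of a.e.\ base point to $B\cap C$ via ergodicity and Fubini. The only cosmetic difference is that the paper phrases the width collapse as ``otherwise the maximal attractor would have positive $\mu_X$-measure,'' while you argue directly from the single-point-over-a.e.-fibre property; both rest on statement~\ref{bony_graph} of Theorem~\ref{KV}.
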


\begin{proof}
Suppose that the boundary of the trapping strip~$\Pi$ is formed by the graphs of two continuous maps~$\psi_1$ and~$\psi_2$ from the base to the fiber. Then the set $F^n(\Pi)$ lies between the~$n$-th images of these graphs. Those images are themselves graphs of continuous maps $\psi_{1,n},\; \psi_{2,n}$ from the base to the fiber. When $n\to\infty$, the difference $\psi_{1,n} - \psi_{2,n}$ tends to zero $\mu_\SS$-almost-everywhere, otherwise the maximal attractor of the strip~$\Pi$ would have positive $\mu_X$-measure.

By the Egorov theorem, over some set $C\subset\Sigma$ of $\mu_\SS$-measure~$1-\delta$ this convergence is uniform, which means that there exists $N$ such that when $n \ge N$ we have
\begin{equation}\label{eq:psis}
\|(\psi_{1,n} - \psi_{2,n})|_C\|_{C^0}<\gamma.
\end{equation}

Denote $D=B\cap C$ (recall that $B$ is the same as in the definition of~$V$). Since $\mu_\Sigma(B) = \mu_\Sigma(C) = 1-\delta$,  the measure of~$D$ is at least $1-2\delta.$ Since the measure~$\mu_\SS$ is ergodic for the shift~$\sigma$, $\mu_\SS$-a.e. point $\omega\in\Sigma$ visits~$D$ infinitely many times under the iterates of~$\sigma$. But if for a point~$p\in\Pi$ its image~$F^n(p)$ finds itself inside the set~$D\times I$ when $n\ge N$, then inequality~\eqref{eq:psis} implies that $F^n(p)\in V.$ Thus, almost every point of the dissipative strip gets inside $V$ after iterating $F$ sufficiently many times, which is exactly what we were to prove.
\end{proof}

Lemma~\ref{l:V} and proposition~\ref{p7} together prove Lemma~\ref{AM_stripe}: Proposition~\ref{p7} says that almoust all orbits intersect~$V$, whereas Lemma~\ref{l:V} says that all points in~$V$ are attracted to the graph~$\Gamma$. As we mentioned earlier, Lemma~\ref{AM_stripe} implies Theorem~\ref{AM_thm}. Thus, for typical SSPs of the class under consideration the Milnor and the statistical attractors coincide with the closure of the union of the attracting graphs~$\Gamma_j$.

\begin{rem}
Similarly Lemma~\ref{AM_stripe} one can prove that for any SSP $F \in \KV$ and any inverse trapping strip~$\Pi$ the attractor of $F^{-1}|_{\Pi}$ coincides with the closure of the corresponding graph~$\tilde{\Gamma}_j$. If we agree to call the smallest closed set that contains $\alpha$-limit sets of almost all points for which those are defined \emph{the Milnor repeller}, then the Milnor repeller for~$F$ would be the closure of the union of the repelling graphs~$\tilde\Gamma_j$.
\end{rem}

\subsection{A counterexample to a weakened version of Theorem~\ref{AM_thm}}
The proof of Theorem~\ref{AM_thm} works for all SSPs satisfying statements~1-4 of Theorem~\ref{KV}. The most important one is statement~4 that claims that the Lyapunov exponent is negative. But it seems that this statement is not necessary, and statements~1-3 are sufficient. Indeed, one may try to argue as follows. Denote by $\tilde{\Pi}$ the intersection of our strip~$\Pi$ with the union of fibers that contain the points of the graph. The maximal attractor of the restriction of our dynamical system to~$\tilde{\Pi}$ is the graph~$\Gamma$. All points of $\tilde{\Pi}$ are attracted to its maximal attractor, therefore almost all points of~$\Pi$ are attracted to the graph.

Unfortunately, this argument is flawed: we can not claim that all points from $\tilde\Pi$ are attracted to the graph. Consider the following example.

\begin{ex} \label{e:example}
Let $F\colon X = \Sigma^2\times [-1, 1]\to X$ be an SSP over the Bernoulli shift with fiber maps that satisfy the following conditions.
\begin{itemize}
\item{Both fiber maps send the segment $[-1, 1]$ strictly inside itself;}
\item{zero is the only fixed point for~$f_1$.}
\item{$f_1(0) = f_2(0) = 0, \; f'_2(0) = 2, \; f'_1(0) = 1/2$;}
\item{the restrictions of both maps $f_1, f_2$ to some segment $[-\eps, \eps]$  are linear;}
\end{itemize}
Note that this example is degenerate.

\begin{lem} \label{l:example}
For the map from Example~\ref{e:example} $A_{max}(F, X)$ is a bony graph that consists of the section $\Gamma = \{(\om, x)\mid x = 0\}$ and some set of bones that has zero measure. Moreover, $\As(F) = \overline\Gamma = \Gamma$. However, $A_M(F)\ne \overline\Gamma$.
\end{lem}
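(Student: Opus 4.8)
The plan is to follow the fiber coordinate through the process $\phi_n=\log|x_n|$ and to reduce everything to the behaviour of the symmetric random walk
\[
S_n(\omega)=\sum_{k=0}^{n-1}\log f'_{\omega_k}(0),
\]
whose increments $\log f'_{\omega_k}(0)\in\{-\log 2,\ \log 2\}$ are equiprobable. Thus $S_n$ has mean zero — the fiberwise Lyapunov exponent of $\Gamma$ vanishes, which is precisely the degeneracy of this example — and, being a recurrent one-dimensional walk, it satisfies $\limsup S_n=+\infty$ and $\liminf S_n=-\infty$ almost surely. On the linear zone $\{|x|\le\eps\}$ the maps act as $t\mapsto t/2$ and $t\mapsto 2t$, so while the orbit stays there $\phi_{n+1}-\phi_n$ is exactly the $n$-th increment of $S$; elsewhere the increments of $\phi_n$ are merely bounded, and since both fiber maps send $[-1,1]$ strictly inside itself, $\phi_n$ is bounded above. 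Because $f_1$ has $0$ as its only fixed point it contracts the whole segment to $0$, so runs of $f_1$ drive the orbit into the linear zone and arbitrarily close to $\Gamma$; in particular $x_n\ne 0$ for all $n$ whenever $x_0\ne 0$, so $\phi_n$ is well defined. Below I write $\{|x|\ge c\}$ for the set of points of $X$ whose fiber coordinate has modulus at least $c$.

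First I would establish statement~1. As both maps send $[-1,1]$ strictly inside itself, $X$ is an absorbing domain and $A_{max}(F,X)=\bigcap_n F^n(X)$; over a point $\omega$ its fiber is the nested intersection $\bigcap_n\bigl(f_{\omega_{-1}}\circ\cdots\circ f_{\omega_{-n}}\bigr)([-1,1])$, a closed interval containing $0$. A point $(\omega,y)$ lies in this fiber iff its whole backward orbit stays in $[-1,1]$; near $0$ the backward maps act as $t\mapsto 2t$ (symbol $1$) and $t\mapsto t/2$ (symbol $2$), so the backward log-magnitude follows a symmetric mean-zero walk (the negative of $S$ read along the past), again with $\limsup=+\infty$. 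Hence for $\mu_\SS$-a.e.\ $\omega$ and every $y\ne 0$ the backward orbit must leave $[-1,1]$, so the fiber is exactly $\{0\}$; the exceptional $\omega$ carrying nondegenerate fibers (the bones) form a set of measure zero, and by Fubini $\mu_X(A_{max})=0$. Thus $A_{max}$ is a bony graph whose graph part is $\Gamma$. The delicate point is that the escape argument is literal only in the linear zone; in $\{|x|\ge\eps\}$ one must use that $f_1^{-1}$ expands magnitudes everywhere (since $|f_1(x)|<|x|$) to push the backward orbit past the boundary during the stretches when the walk climbs.

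Next, $\As(F)=\Gamma$. It suffices to show that for a.e.\ $x$ and every fixed $c>0$ one has $Freq(x,\{|x|\ge c\})=0$, for then $\oms(x)\cap\{|x|\ge c\}=\varnothing$ for all $c$ (cover the compact set $\{|x|\ge c\}$ by finitely many balls of vanishing frequency and use subadditivity of $Freq$), whence $\oms(x)\subseteq\Gamma$. This is an occupation-time estimate for the top window of $\phi_n$: the process is a symmetric walk that is reflected, and indeed pushed down, near its upper barrier, hence null recurrent, and a null-recurrent one-dimensional walk spends an asymptotically vanishing fraction of time in any fixed window. Concretely, modelling the top by the reflected walk $M_n-S_n$ with $M_n=\max_{k\le n}S_k$ and invoking the discrete L\'evy identity $M_n-S_n\overset{d}{=}|S_n|$, the occupation time of a fixed window is $O(\sqrt{N\log\log N})=o(N)$ a.s. The reverse inclusion $\Gamma\subseteq\oms(x)$ follows because the complementary event $\{|x|<\eps\}$ then has frequency $1$, while by ergodicity of $\sigma$ every base cylinder is visited with positive frequency; intersecting gives $(\omega^*,0)\in\oms(x)$ for every $\omega^*$. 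Hence $\oms(x)=\Gamma$ a.e.\ and $\As=\overline\Gamma=\Gamma$.

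Finally, $A_M(F)\ne\overline\Gamma$. Fix $x=(\omega,x_0)$ with $x_0\ne 0$, a full-measure condition. If the orbit eventually stayed inside $\{|x|<\eps\}$, say for $n\ge n_0$, then on that range $\phi_n=\phi_{n_0}+(S_n-S_{n_0})$, and $\limsup_n(S_n-S_{n_0})=+\infty$ would force $\phi_n>\log\eps$, a contradiction. Therefore $|x_n|\ge\eps$ for infinitely many $n$, and by compactness these iterates accumulate to a point of $\omega(x)$ whose fiber coordinate has modulus $\ge\eps$. Since $\omega(x)\subseteq A_M$ for a.e.\ $x$, the Milnor attractor contains points off $\Gamma$, so $A_M\ne\overline\Gamma$. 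The main obstacle in the whole argument is the occupation-time estimate of the previous paragraph: making rigorous that $\phi_n$ behaves like a null-recurrent reflected random walk requires comparing it with $S_n$ and controlling the nonlinear zone $\{|x|\ge\eps\}$, where the increments of $\phi_n$ are no longer exactly those of $S_n$; everything else reduces to the recurrence and null recurrence of a mean-zero walk.
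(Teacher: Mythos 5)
Your treatment of the first and third claims is essentially the paper's: both reduce to the recurrence of the symmetric walk $S_n$ in the logarithmic chart on $[-\varepsilon,\varepsilon]\setminus\{0\}$. For $A_M\ne\overline\Gamma$ you argue that a.e.\ forward orbit with $x_0\ne 0$ must leave the linear zone infinitely often, which is the paper's ``the basin of $\Gamma$ has zero measure''; for the maximal attractor you argue that a.e.\ backward orbit eventually fails to admit a preimage, which is the paper's argument via the $2^{-n_0}$ probability of a backward run of the symbol $1$ long enough that $f_1^{-n_0}$ is undefined outside $[-\varepsilon,\varepsilon]$ (your phrase ``leaves $[-1,1]$'' should really be ``ceases to have a preimage'', and the step from ``the walk climbs to $\log\varepsilon$'' to ``the preimage fails'' needs exactly that run-of-ones / Borel--Cantelli step, but this is at the level of detail of the paper's own sketch).

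The genuine gap is in your proof that $A_{stat}(F)=\Gamma$. You reduce it to the claim that the occupation time of $\{|x|\ge c\}$ is $o(N)$ and justify this by modelling $\phi_n=\log|x_n|$ as a symmetric walk ``reflected, and indeed pushed down, near its upper barrier'', invoking the L\'evy identity $M_n-S_n\overset{d}{=}|S_n|$. But nothing in Example~\ref{e:example} constrains the fiber maps on $\{|x|\ge\varepsilon\}$ beyond sending $[-1,1]$ strictly into itself: $f_2$ may, for instance, have a strongly attracting fixed point at $x=1/2$, in which case $\phi_n$ in the nonlinear zone is nothing like a reflected symmetric walk --- locally it is a positive-recurrent perpetuity-type process that lingers near $\log(1/2)$. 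The conclusion $o(N)$ is still true, but to prove it along your lines you would need an excursion/renewal argument: sojourns of the orbit in $\{|x|\ge c\}$ have finite expected length (escape via long runs of $f_1$), while successive returns to $\{|x|\ge c\}$ pass through the null-recurrent linear zone and hence have infinite expected duration. None of this is in the proposal, and you yourself flag it as ``the main obstacle''. The paper avoids the issue entirely: it shows by the arithmetic-progression argument that $\delta_0$ is the \emph{unique} stationary measure of the pair $(f_1,f_2)$, and then cites the fact (Deroin, Furstenberg) that every weak-$\ast$ partial limit of the empirical measures $\frac1n\sum_{j<n}\delta_{x_j}$ is stationary for a.e.\ $\omega$; since $\{|x|\ge c\}$ is closed, this immediately gives $Freq(x,\{|x|\ge c\})\le\delta_0(\{|x|\ge c\})=0$. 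I recommend replacing your occupation-time paragraph with that argument; your subsequent deduction of $\Gamma\subseteq\omega_{stat}(x)$ from the frequency estimate and the ergodicity of $\sigma$ is fine as written.
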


\noindent One may also prove that there exists an $SRB$-measure according to which the orbits of almost all points are distributed. This is the measure $\mu_{\Sigma^2} \times \delta(0)$, the product of the Bernoulli measure in the base and the delta-measure at zero in the fiber. Its support coincides with $\As(F)$ and with $\overline\Gamma$. 

Let us give the idea of the proof of Lemma~\ref{l:example}.

\begin{itemize}
\item Let us begin with the third statement. First let us show that $A_M(F)\not\subset \Gamma$. In the logarithmic charts on intervals $(0, \eps]$ and $[-\eps, 0)$ our dynamical system is just a symmetric random walk.  It follows from the properties of such a random walk that the orbits of almost all points of the strip $\Pi_0 =\Sigma^2\times [-\eps, \eps]$ leave this strip for both forward and backward iterates of $F$ (at this point we are not even interested whether they get back eventually). Hence the set~$B_0$ of points of the strip $\Pi_0$ which never leave~$\Pi_0$ {\it and} are attracted to~$\Gamma$ under the iterates of~$F$ has zero measure. Therefore, the whole basin of attraction of the section~$\Gamma$ has zero measure, because it is just the union $\cup_{n \in \mathbb N} F^{-n}(B_0)$.

\item Let us move on to the first statement. The inclusion $\overline \Gamma\subset A_{max}(F, X)$ is obvious. 

To prove that $A_{max}(F, X)$ is a bony graph it suffices to show that $\mu_X(A_{max}(F, X)) = 0$.  Note that the point $p\in X$ belongs to $A_{max}(F, X)$ if and only if the preimage $F^{-n}(p)$ is defined for every~$n>0$. On the other hand, there is $n_0>0$ such that the $f_1^{n_0}$-preimage is not defined for any point of the fiber that lie outside the segment $[-\eps,\eps]$. Thus for a random point $p\in X\setminus\Pi_0$ the preimage $F^{-n_0}(p)$ is not defined with probability at least $2^{-n_0}$. Since almost all points eventually leave the strip~$\Pi_0$ when we repeatedly take the $F$-preimages, it is not difficult to derive that for almost all points of $X$ some $F$-preimage is not defined, and therefore $\mu_X(A_{max}(F, X)) = 0$.

\item Now let us prove the second statement. We prove by contradiction that the measure $\delta_0$ is the only probability stationary measure on~$I$ for the couple of fiber maps $f_1, f_2$ applied with equal probability.\footnote{Recall that a measure $\nu$ is called \emph{stationary} if for any measurable $A\subset I$ one has $\nu(A) = \frac{1}{2}(\nu(f^{-1}_1(A)) + \nu(f^{-1}_2(A)))$.} Suppose there is a stationary measure~$\nu$ such that $\mathrm{supp}(\nu)\ne\{0\}$. Since its support $\mathrm{supp}(\nu)$ must be forward invariant under $f_1$, this support should intersect the set $[-\eps, \eps]\setminus\{0\}$. Then there exists a segment $J\subset [-\eps, \eps]\setminus \{0\}$ such that $\nu(J)>0, \; J\cap f_1(J) = \emptyset$. Since measure $\nu$ is stationary and $f_1 = f_2^{-1}$ in $[-\eps, \eps]$, for any $j\in \mathbb N$ we have $\nu(f_1^j(J)) = \frac{1}{2}\Big( \nu(f^{j-1}_1(J)) + \frac{1}{2} \nu(f^{j+1}_1(J))\Big)$, i.e., $\Big(\nu(f_1^j(J))\Big)_{j\in\mathbb N}$ is an arithmetic progression. Since $\nu$ is a probability measure, all members of this arithmetic sequence are nonnegative and its sum is at most~$1$. Hence this is a series of zeros, which contradicts our assumption that $\nu(J) > 0$.  

\item For any $x\in I$, for $\mu_{\Sigma^2}$-a.e. $\omega\in\Sigma^2$ any partial limit (in $\ast$-weak topology) of the sequence ${\frac{1}{n}\sum_{j = 0}^{n-1} \delta(f_{\omega_j}\circ\dots\circ f_{\omega_0}(x))}$ is a stationary probability measure~(\cite[p.~4]{Doin}, see also~\cite[Lemma~2.5]{Fur}).

\item Since $\frac{1}{n}\sum_{j = 0}^{n-1}\delta(\sigma^j(\omega)) \to \mu_{\Sigma^2}$ for almost all $\omega$, two previous bullet points imply that for the SSP $F$ positive semi-orbits of almost all points of $X$ are distributed according to the measure~$\mu_{\Sigma^2}\times\delta(0)$. This means that $\As(F) = \Gamma$.
\end{itemize}

\end{ex}

\section{On the projection of the attractor onto the fiber} \label{s:proj}
In this section we will consider SSPs over the Bernoulli shift of form~\eqref{e:SKP}. The fiber will be an arbitrary compact manifold $M$ (possibly with boundary), and the fiber maps will be its diffeomorphisms. For manifolds with boundaries here and below in this section we will consider diffeomorphisms onto the manifold. The projection onto the fiber along the base will be denoted as $\pi_M: X \to M$.

When proving the Lyapunov stability of the statistical attractor of a generic SSP with the fiber a circle (section~\ref{s:stable}) it will be more convenient to work not with the attractor itself but with its projection to the fiber. In the present section we will study connections between the attractor and its projection.
First of all we will provide a criterion, analogous to Remark~\ref{p:defAM}, for a point of the fiber to lie in the projection of the attractor. Then we will show that the attractor can be reconstructed using its projection to the fiber. After that we will prove Lyapunov stability of the projection of the attractor (see Definition~\ref{d:stproj}) is equivalent to the stability of the attractor itself.

\begin{rem} \label{r:AMAS}
All results of this section will stay true if statistical $\omega$-limit sets are replaced by the regular ones and the statistical attractor is replaced by the Milnor attractor. The proofs can be obtained by the ones presented below by the same substitution.
\end{rem}

\subsection{Reconstructing the attractor from its projection onto the fiber}
For a sequence~$\omega=\dots\omega_{-1} \omega_0 \omega_1 \dots$$\in \Sig$ let us call the sequence~$\omega_0 \omega_1 \omega_2\dots$ its \emph{future half} and the sequence~ $\dots \omega_{-2} \omega_{-1}$ its \emph{past half}. We shall need the following proposition, which is a counterpart of Remark~\ref{p:defAM} for the projection of the attractor onto the fiber.

\begin{prop} \label{p:defPAM}
The point~$x \in M$ belongs to the projection of~$\As$ onto the fiber iff for any open subset~$U \ni x$ the points $y\in X$ such that $\pi_M(\oms(y))$ intersects~$U$ form a set of positive measure.
\end{prop}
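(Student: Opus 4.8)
The plan is to reduce the statement to the already-established characterization of $\As$ in Remark~\ref{p:defAM} by carefully pushing the condition forward and backward along the fibration $\pi_M \colon X \to M$. Recall Remark~\ref{p:defAM}: a point $z \in X$ lies in $\As$ iff for every neighborhood $W \ni z$ the set of $y$ with $\oms(y) \cap W \ne \emptyset$ has positive $\mu_X$-measure. Since $x \in \pi_M(\As)$ means exactly that some $\om \in \SS$ gives $(\om, x) \in \As$, I would work with neighborhoods of such a point $(\om,x)$ in $X$ and relate them to neighborhoods of $x$ in $M$.

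First I would prove the easy direction ($\Rightarrow$). Suppose $x = \pi_M(z)$ for some $z = (\om,x) \in \As$. Given an open $U \ni x$ in $M$, the cylinder-times-$U$ set $\pi_M^{-1}(U)$ is an open neighborhood of $z$ in $X$, so by Remark~\ref{p:defAM} the set of $y$ with $\oms(y) \cap \pi_M^{-1}(U) \ne \emptyset$ has positive measure. But $\oms(y) \cap \pi_M^{-1}(U) \ne \emptyset$ says precisely that $\pi_M(\oms(y))$ meets $U$, so the same positive-measure set of $y$ works; this is exactly the desired condition. The forward implication therefore follows almost immediately from the definition of the projection.

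The harder direction ($\Leftarrow$) is where the main obstacle lies. Here I assume that for every open $U \ni x$ the set $S_U = \{y : \pi_M(\oms(y)) \cap U \ne \emptyset\}$ has positive measure, and I must produce an actual point $z \in \As$ with $\pi_M(z) = x$. The difficulty is that the condition only controls the $M$-coordinate of the statistical limit sets, so I do not directly get a point of $\As$ over $x$; I must recover the missing base coordinate. The plan is to take a countable basis $U_1 \supset U_2 \supset \cdots$ of open neighborhoods of $x$ shrinking to $x$, and for each $k$ use the positive measure of $S_{U_k}$ together with Remark~\ref{p:defAM} to find a point $z_k \in \As$ with $\pi_M(z_k) \in U_k$: concretely, positive measure of $S_{U_k}$ means there are $y$'s whose statistical limit sets have a point projecting into $U_k$, and such a limit point lies in $\As$ (since $\oms(y) \subset \As$ for a.e. $y$, and by a standard Fubini/null-set argument one may assume the witnessing $y$ lie in the full-measure set where this holds). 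Thus $z_k \in \As$ and $\pi_M(z_k) \to x$. Since $\As$ is closed and $X = \SS \times M$ is compact (both $\SS$ and $M$ are compact), the sequence $z_k$ has a convergent subsequence with limit $z \in \As$, and continuity of $\pi_M$ forces $\pi_M(z) = x$, giving $x \in \pi_M(\As)$ as required.

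The step I expect to be most delicate is ensuring that the witnessing points extracted from $S_{U_k}$ genuinely lie in $\As$, rather than merely having a statistical limit point projecting near $x$. This requires invoking that $\oms(y) \subset \As$ for $\mu_X$-almost every $y$ (which holds by the very definition of the statistical attractor as the smallest closed set containing a.e. statistical limit sets), intersecting the positive-measure set $S_{U_k}$ with this full-measure set to keep it of positive measure, and then selecting a limit point of $\oms(y)$ over $U_k$ that belongs to the closed set $\As$. By Remark~\ref{r:AMAS} the identical argument, with $\oms$ replaced by $\omega$ and $\As$ by $A_M$, establishes the corresponding statement for the Milnor attractor, so I would state the proof for $\As$ and note it transfers verbatim.
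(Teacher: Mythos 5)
Your proposal is correct and follows essentially the same route as the paper: the forward direction is the identical application of Remark~\ref{p:defAM} to a point $z\in\As$ over $x$ followed by projection, and the converse rests on the same two facts the paper uses, namely that $\oms(y)\subset\As$ for $\mu_X$-a.e.\ $y$ and that $\As$ (hence $\pi_M(\As)$) is compact. The only cosmetic difference is that the paper argues the converse contrapositively, separating $x$ from the compact set $\pi_M(\As)$ by a single neighborhood $V$, whereas you run a direct limit argument with shrinking neighborhoods $U_k$ and a convergent subsequence; both are valid.
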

\begin{proof}
Suppose that a point $x\in M$ does not belong to the projection of the attractor onto the fiber. The set~$\pi(\As)$ is compact, since it is the image of a compact set~$\As$ under a continuous map, and therefore the point~$x$ has a neighborhood~$V\subset M$ that does not intersect with~$\pi(\As)$. Then the set~$\Sig \times V$ does not intersect~$\As$, which implies that for almost every point $y\in X$ the set~$\pi_M(\oms(y))$ does not intersect~$V$.

Now suppose that the point~$x$ is in the projection of the attractor, a point $z \in \As$ being projected to $x$. The required condition on~$x$ is obtained by applying Remark~\ref{p:defAM} to the point~$z$ and then considering the projections to the fiber.
\end{proof}

\begin{lem} \label{l:proj}
For $\mu_X$-almost-any point $y \in X$, the set $A=\oms(y)$ can be reconstructed from its projection to the fiber in the following way: the point $x$ belongs to $A$ if and only if the projections of all preimages of this point  belong to the projection of~$A$. 
\end{lem}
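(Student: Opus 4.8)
I would argue fiberwise. Write $A:=\oms(y)$; it is compact and, since $F$ is a homeomorphism, fully invariant, $F(A)=A$. For $\zeta\in\Sigma$ put $A_\zeta:=\{r\in M:(\zeta,r)\in A\}$ and $P:=\pi_M(A)=\bigcup_\zeta A_\zeta$. Invariance reads fiberwise as $A_{\sigma\zeta}=f_{\zeta_0}(A_\zeta)$, so that for every $n$
\[
A_\omega=f_{\omega_{-1}}\circ\cdots\circ f_{\omega_{-n}}\big(A_{\sigma^{-n}\omega}\big),\qquad \pi_M\big(F^{-n}(\omega,p)\big)=f_{\omega_{-n}}^{-1}\circ\cdots\circ f_{\omega_{-1}}^{-1}(p)=:p_n .
\]
The claim is that $(\omega,p)\in A$ iff $p_n\in P$ for all $n$. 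One direction is immediate: if $(\omega,p)\in A$, then by backward invariance $F^{-n}(\omega,p)\in A$, hence $p_n\in P$.

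The converse rests on one structural fact, which is the heart of the matter: for $\mu_X$-almost every $y$ the fiber $A_\zeta$ depends only on the past half $\dots\zeta_{-2}\zeta_{-1}$ of $\zeta$. Granting this, I would finish by a limiting argument. Assume $p_n\in P$ for all $n$ and fix $n$. Since $p_n\in P$ there is $\zeta$ with $(\zeta,p_n)\in A$, and by the structural fact $(\zeta^{(n)},p_n)\in A$ for \emph{every} $\zeta^{(n)}$ with the same past as $\zeta$. Choose $\zeta^{(n)}$ with that past and with future prescribed by $\zeta^{(n)}_j=\omega_{j-n}$ for $j\ge 0$ (its first $n$ future symbols are then $\omega_{-n},\dots,\omega_{-1}$). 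Applying $F^n$ and using invariance,
\[
F^n\big(\zeta^{(n)},p_n\big)=\Big(\sigma^n\zeta^{(n)},\,f_{\omega_{-1}}\circ\cdots\circ f_{\omega_{-n}}(p_n)\Big)=\big(\sigma^n\zeta^{(n)},p\big)\in A .
\]
By construction $(\sigma^n\zeta^{(n)})_i=\omega_i$ for all $i\ge -n$, so $\sigma^n\zeta^{(n)}\to\omega$ as $n\to\infty$; since $A$ is closed, $(\omega,p)\in A$, as desired.

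I expect the structural fact to be the main obstacle, and I would prove it by a frequency argument exploiting that the base is a Bernoulli shift. Writing $y=(\eta,q)$ and $q_m=\pi_M F^m(y)$, membership $(\zeta,r)\in\oms(y)$ at a scale $(k,\varepsilon)$ means that the set of times $m$ with $\eta_{[m-k,m+k]}=\zeta_{[-k,k]}$ and $q_m\in B(r,\varepsilon)$ has positive upper density. I would split this event into the \emph{adapted} part $D_m$ — that $\eta_{[m-k,m-1]}=\zeta_{[-k,-1]}$ and $q_m\in B(r,\varepsilon)$, which depends only on the $\eta_j$ with $j<m$ (and on $q$) — and the future part $[\eta_{[m,m+k]}=\zeta_{[0,k]}]$. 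Under the Bernoulli measure the future block is independent of $D_m$ and takes each of its $s^{k+1}$ values with probability $s^{-(k+1)}>0$; grouping times by residue modulo $k+1$ turns $\mathbf 1_{D_m}\big(\mathbf 1[\eta_{[m,m+k]}=a]-s^{-(k+1)}\big)$ into a bounded martingale-difference sum, so for a.e.\ $\eta$ its Ces\`aro average tends to $0$ for every fixed block $a$. Hence the density of the full event factorizes as $s^{-(k+1)}\cdot(\text{density of }D_m)$, the same positive number for every future block. In particular, replacing $\zeta$ by any $\zeta'$ with the same past (same $D_m$, a different but equally admissible future block) preserves positivity, so $(\zeta',r)\in\oms(y)$, which is exactly past-dependence of the fibers.

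Two routine points complete the argument: one handles all scales and reference points through a countable family of rational windows and balls, and one passes from ``for each fixed $q$, a.e.\ $\eta$'' to ``$\mu_X$-a.e.\ $y$'' by Fubini. By Remark~\ref{r:AMAS} the same reasoning, with $\omega$-limit sets replacing statistical ones, gives the analogue for the Milnor attractor.
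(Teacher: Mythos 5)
Your proposal is correct, and its probabilistic core coincides with the paper's: under the Bernoulli measure the future block $\eta_{[m,m+k]}$ is independent of the adapted event $D_m$, and a law of large numbers with a spacing trick shows that each admissible future word is realized on a fixed positive proportion of the relevant times (you group all times by residue mod $k+1$ and use a martingale SLLN; the paper restricts to the subsequence of visit times $t_k$, thins it by a factor $|w|$ to avoid overlaps, and applies the i.i.d.\ SLLN). Where you genuinely diverge is in the organization of the deduction. The paper uses the frequency fact directly: for $y$ in the full-measure set $Y=\bigcap Y(w,U_i)$ it checks that $\oms(y)$ meets $F^{-n}(V)=U_w\times U_M$ for every cylindrical neighborhood $V$ of $x$, which already proves the lemma; the past-dependence of the fibers (your ``structural fact'') is then read off as Corollary~\ref{c:future}, and the limiting argument $F^n(\zeta^{(n)},p_n)\to(\omega,p)$ that you employ appears only later, in the proof of Theorem~\ref{t:proj}. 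You invert this order: you establish the structural fact first, via an exact factorization of upper densities over future blocks, and then obtain the lemma by the Theorem~\ref{t:proj}-style closure argument. Both routes are sound; yours isolates invariance of $\oms(y)$ under change of the future half as the single measure-theoretic input (everything afterwards is soft and reusable for Theorem~\ref{t:proj}), at the price of proving a slightly sharper a.s.\ statement than the paper's ``positive lower limit of a ratio''. The reduction to countably many parameters (finite words and a countable base of the fiber) and the Fubini passage from fixed $q$ to $\mu_X$-a.e.\ $y$ are handled identically in both arguments.
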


\begin{proof}
\noindent Fix a finite word $w \in \{1, \dots, s\}^{|w|}$ and an open set~$U_M \subset M$. 

\noindent Let us introduce the following notation:
\bi
\item[] $U := \Sig \times U_M$,
\item[] $U_w := \{\tilde \omega \in \Sig:  \tilde \omega_0 \dots \tilde \omega_{|w|-1}=w\} \times U_M,$
\item[] $Y(w, U_M)$ is the set of all points $y \in X$ such that either the positive semi-orbit of~$y$ enters~$U$ only finitely many times or the lower limit of the ratio of times it spends in $U_w \subset U$ and in~$U$ is positive, i.e.,
$$
\liminf \limits_{N \to \infty} \frac{ \#\{ n<N: F^n(y) \in U_w \} }{ \#\{ n<N: F^n(y) \in U \} } > 0.
$$
\ei

\begin{prop} \label{p:proj}
For any word~$w$ and any open set $U_M \subset M$ the set $Y(w, U_M)$ has full measure. 
\end{prop}
\begin{proof}
It suffices to show that for any point~$p \in M$ we have 
\[
\mu_\Sig(\{\omega: (\omega, p) \in Y(w, U_M)\})=1,
\]
and then use the Fubini theorem.

Fix a point~$p \in M$.
Suppose that the $k$-th visit of the orbit of the point~$(\omega, p)$ to the set~$U$ happens at the time $t_k=t_k(\omega)$ (i.e., $F^{t_k}(\; (\omega, p) \;) \in U$). It is possible that there are only finitly many of those visits in total, so some $t_k$ can be undefined. Let $l(\omega)$ be the number of the first undefined $t_k$. 
 
Let us regard the base~$\Sig$ with the measure~$\mu_\Sig$ as a probability space and call subsets of~$\Sig$ \emph{events}.  
Let us define the following sequence of events on $\Sig$: 
 \[
 A_k = \{\omega: F^{t_k}(\; (\omega, p) \;) \in U_w\}, \; k \in \mathbb N.
 \]
Once again, some $A_k$ may be undefined. 

Let us assume first that $l(\omega) = \infty$ for every $\omega$.
Then we need to prove that the lower limit of the fraction of the events $A_k$ that happened is almost surely positive.
It follows from the definition of $U_w$ that~$\omega \in A_k$ if and only if the future half of the basewise coordinate of the point~$F^{t_k}(\; (\omega, p) \;)$ begins with the word~$w$. We can rewrite this as
\[
\omega_{t_k(\omega)} \dots \omega_{t_k(\omega)+|w|-1} = w.
\]

Consider first a particular case when~$|w|=1$. Fix $m>0$ and consider the $\sigma$-algebra~$\mathcal A_m$ generated by the events~$A_1, \dots, A_m$ and by the random variable~$t_{m+1}$. Then the conditional probability of the event~$A_{m+1}$ given $\mathcal A_m$ is constant and equal to~$1/s$ (recall that $\mu_\Sig$ id the $(1/s, \dots, 1/s)$-Bernoulli measure). This follows from the fact that for a fixed value of~$t_{m+1}$ the event~$A_{m+1}$ depends on the symbol~$\omega_{t_{m+1}}$, whereas the events~$A_1, \dots, A_m$ depend on the symbols of the sequence $\omega$ with numbers not greater than~$t_{m+1}-1$. 
Hence events $A_k$ are mutually independent and each of them happens with probability~$1/s$. By the strong law of large numbers, the limit of the fraction of the events~$A_k$ that happened almost surely exists and equals~$1/s$.

Now let the length of the word~$w$ be arbitrary. The previous argument does not work now because the subwords of the sequence~$\omega$ that define the events~$A_m$ and $A_{m+1}$ may overlap. This argument can be saved by applying it to the subsequence~$A_{|w|}, A_{2|w|}, \dots$. Then the numbers of the first letters of the words responsible for~$A_{m|w|}$ and $A_{(m+1)|w|}$ differ by at least~$|w|$, and therefore these subwords of~$\omega$ do not overlap. 
Hence the sequence~$A_{|w|}, A_{2|w|} \dots$ is formed by mutually independent events, each of which happens with probability~$\frac 1 {s^{|w|}}$. Applying the strong law of large numbers to this subsequence we conclude that the lower limit of the fraction of events~$A_k$ that happened is almost surely at least~$\frac 1 {|w| s^{|w|} }$.

Now let us get rid of the assumption that~$l(\omega) = \infty$. 
For this end let us define the analogues~$\tilde A_k$ of the events~$A_k$ on the probability space~$(\Sig \times \Sig, \mu_\Sig \times \mu_\Sig)$. 
Define~$\tilde A_k$ as the set formed by the pairs $(\omega, \omega') \in \Sig \times \Sig$ such that
\bi
\item either $t_k(\omega) < \infty$ and $\omega_{t_k(\omega)} \dots \omega_{t_k(\omega)+|w|-1} = w$,
\item or $t_k(\omega) = \infty$ and $\omega'_{k-l(\omega)} \dots \omega'_{k-l(\omega)+|w|-1} = w$.
\ei 

Arguing as above, we see that the subsequence~$\tilde A_{|w|}, \tilde A_{2|w|} \dots$ consists of mutually independent events. Thus the lower limit of the fraction of events~$\tilde A_k$ that happened is almost surely not smaller than~$\frac 1 {|w| s^{|w|} }$. This implies that the set $\tilde B \subset \Sig \times \Sig$ of pairs $(\omega, \omega')$ such that the fraction of the events~$\tilde A_k$ that happened tends to zero has zero $\mu_\Sig \times \mu_\Sig$-measure. Denote by~$B \subset \Sig$ the set of $\omega$ such that every event $A_k$ (without tilde) is defined and the limit of the fraction of the events~$A_k$ that happened equals zero. Since for any $\omega \in B$ one has $\omega \times \Sig \subset \tilde B$, the Fubini theorem implies that $\mu_\Sig(B)=0$. Therefore, for almost every $\omega \in \Sig$
\bi
\item either only a finite number of events $A_k$ is defined 
\item or the lower limit of the fraction of the events $A_k$ that happened is positive. 
\ei
Thus, $(\omega, p) \in Y(w, U_M)$ for almost every $\omega$.
\end{proof}

Let us continue the proof of Lemma~\ref{l:proj}. We need to construct a full measure set~$Y$ such that for any~$y \in Y$ for the set~$\oms(y)$ the following would hold:
the point $x$ belongs to $\oms(y)$ if and only if the projections onto the fiber of all preimages of~$x$ lie in the projection of~$\oms(y)$. 

In order to construct~$Y$, fix a countable base of topology~$\{U_i\}_{i\in \mathbb N}$ on $M$ and set 
\[
Y=\bigcap_{w, i} Y(w, U_i),
\]
where $w$ ranges over all finite words and $i$ ranges over all positive integers. By Proposition~\ref{p:proj} all sets $Y(w, U_i)$ have full measure. Thus,~$Y$, being their countable intersection, is a set of full measure as well.

Now let's prove that for $y \in Y$ the set~$\oms(y)$ has the required property. The ``only if'' part follows from the invariance of the set~$\oms(y)$, and the ``if'' part is yet to be proved.

Consider a point 
$$x=(\omega, p),\; \omega \in \Sig,\; p \in M.$$
Since $\oms(y)$ is closed, in order to check that $x \in \oms(y)$ it suffices to prove that~$\oms(y)$ intersects any cylindrical neighborhood~$V$ of the point~$x$. 

Let a cylindrical neighborhood have the form $V=V_\Sig \times V_M$, where 
$V_\Sig \subset \Sig$ is a set of sequences~$\tilde \omega$ such that $\tilde \omega_{-n} \dots \tilde \omega_{n-1}=\omega_{-n} \dots \omega_{n-1}$, 
whereas $V_M \subset M$ is a neighborhood of the point~$p$ in the fiber.

Since the set $\oms(y)$ is invariant, it would be sufficient to show that this set intersects~$F^{-n}(V)$. It is not difficult to see that $F^{-n}(V)= U_w \times U_M$ where 
$U_w$ is the set of all sequences~$\tilde \omega \in \Sig$ such that $\tilde \omega_0 \dots \tilde \omega_{2n-1}=\omega_{-n} \dots \omega_{n-1}$, and
$$U_M=(f^{-1}_{\omega_{-n}} \circ \dots \circ f^{-1}_{\omega_{-1}})(V_M).$$

From the countable base of topology used in the definition of the set~$Y$, take any set $U_i$ such that \begin{equation} \label{e:Ui}
\pi_M F^{-n}(x) \in U_i \subset U_M.
\end{equation}
Since we assume that the projections to the fiber of all preimages of the point~$x$ lie in the projection of~$\oms(y)$, the point $\pi_M F^{-n}(x)$ lies in $\pi_M \oms(y)$. Together with~\eqref{e:Ui} this means that~$U_i$ intersects~$\pi_M \oms(y)$. Therefore, the set~$\oms(y)$ intersects~$\Sig \times U_i$. The following property can be easily deduced from the definition of the set~$Y(w, U_i)$:
\begin{itemize}
\item[] Let $z\in Y(w, U_i)$ and let $\oms(z)$ intersect $\Sig \times U_i$. Then~$\oms(z)$ intersects~$U_w \times U_i$.
\end{itemize}
\noindent Applying this property to $z=y$, we conclude that~$\oms(y)$ intersects $U_w \times U_i \subset U_w \times U_M = F^{-n}(V)$.   
\end{proof}

\noindent We have proved Lemma~\ref{l:proj}. Let us use it to obtain some corollaries. 

\begin{cor} \label{c:future}
For almost every point~$y \in X$ it does not depend on the future half of the base coordinate of~$x$ whether $x$ belongs to the set $A=\oms(y)$ or not. More formally, if $(\omega, p) \in A$, then $(\tilde \omega, p) \in A$ for any sequence~$\tilde \omega$ that has the same past half as $\omega$. This statement is also true if~$A$ is the whole statistical attractor $\As$ instead of just the set $\oms(y)$.
\end{cor}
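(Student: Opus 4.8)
The plan is to read the statement off the reconstruction criterion of Lemma~\ref{l:proj}, the point being that the projections of the preimages of a point depend only on the past half of its base coordinate. Since $F$ is invertible here (the shift is a homeomorphism and the fiber maps are diffeomorphisms onto $M$), for $x=(\omega,p)$ one computes
\[
\pi_M\bigl(F^{-n}(x)\bigr)=\bigl(f_{\omega_{-n}}^{-1}\circ\dots\circ f_{\omega_{-1}}^{-1}\bigr)(p),\qquad n\ge 0,
\]
which involves only $p$ and the symbols $\omega_{-1},\dots,\omega_{-n}$, i.e.\ the past half of $\omega$ (for $n=0$ this is just $p$).

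Next I would fix a point $y$ in the full-measure set for which Lemma~\ref{l:proj} holds and put $A=\oms(y)$. By that lemma, $x=(\omega,p)\in A$ if and only if $\pi_M(F^{-n}(x))\in\pi_M(A)$ for every $n\ge 0$. By the display above this condition depends only on $p$ and on the past half of $\omega$; hence if $(\omega,p)\in A$ and $\tilde\omega$ has the same past half as $\omega$, then $(\tilde\omega,p)$ satisfies the same condition and so lies in $A$. This settles the first part of the corollary.

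For the statement about $\As$ I would pass to the closure of a union. Let $G$ be the full-measure set of $y$ for which both Lemma~\ref{l:proj} holds and $\oms(y)\subset\As$; by minimality in the definition of $\As$ one then has $\As=\overline{\bigcup_{y\in G}\oms(y)}$. Call a set $A\subset X$ \emph{future-independent} if $(\omega,p)\in A$ implies $(\tilde\omega,p)\in A$ for every $\tilde\omega$ with the same past half as $\omega$. The first part shows each $\oms(y)$ with $y\in G$ is future-independent, and an arbitrary union of future-independent sets is obviously future-independent; so it remains to check that the closure of a future-independent set is again future-independent.

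This last point is the only real step. Given $(\omega,p)\in\overline A$ with $A$ future-independent, take $(\omega^{(k)},p^{(k)})\in A$ with $(\omega^{(k)},p^{(k)})\to(\omega,p)$, and fix $\tilde\omega$ with the same past half as $\omega$. Define $\tilde\omega^{(k)}$ by $\tilde\omega^{(k)}_j=\omega^{(k)}_j$ for $j<0$ and $\tilde\omega^{(k)}_j=\tilde\omega_j$ for $j\ge 0$. Then $\tilde\omega^{(k)}$ has the same past half as $\omega^{(k)}$, so $(\tilde\omega^{(k)},p^{(k)})\in A$ by future-independence; moreover $\tilde\omega^{(k)}\to\tilde\omega$, since on the negative coordinates $\omega^{(k)}$ eventually agrees with $\omega$ (hence with $\tilde\omega$), while on the non-negative coordinates $\tilde\omega^{(k)}$ equals $\tilde\omega$ by construction. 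Passing to the limit yields $(\tilde\omega,p)\in\overline A$, as required. I expect this verification, together with the routine remark that coordinatewise eventual agreement gives convergence in the metric $d$, to be the main (though modest) obstacle; everything else is immediate from Lemma~\ref{l:proj}. By Remark~\ref{r:AMAS} the same argument applies verbatim to the Milnor attractor.
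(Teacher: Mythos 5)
Your proof is correct. The first part is argued exactly as in the paper: the reconstruction criterion of Lemma~\ref{l:proj} involves only the fiber coordinate and the symbols $\omega_{-1},\omega_{-2},\dots$, so membership in $\oms(y)$ is insensitive to the future half; your explicit formula for $\pi_M(F^{-n}(x))$ is the same observation the paper leaves implicit. For the passage to $\As$ you take a genuinely different (though equally short) route. The paper deduces it from the first part via Remark~\ref{p:defAM}: a point lies in $\As$ iff every neighborhood of it is met by $\oms(y)$ for a positive-measure set of $y$, and since each such $\oms(y)$ is future-independent, a point of $\oms(y)$ in a cylindrical neighborhood of $(\omega,p)$ can have its future half swapped to land in the corresponding neighborhood of $(\tilde\omega,p)$; the closure issue is absorbed into the neighborhood bookkeeping. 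You instead write $\As=\overline{\bigcup_{y\in G}\oms(y)}$ for a full-measure $G$ (which is legitimate: the union over $G$ is contained in $\As$ since $\As$ is closed, and its closure contains $\As$ by minimality) and then verify that future-independence survives arbitrary unions and closures, the latter by the correct splicing argument $\tilde\omega^{(k)}_j=\omega^{(k)}_j$ for $j<0$, $=\tilde\omega_j$ for $j\ge 0$, together with the observation that coordinatewise eventual agreement gives convergence in the metric $d$. Your version makes the closure step explicit and avoids invoking Remark~\ref{p:defAM}; the paper's version is a one-liner but hides that step. Both yield the same statement, and by Remark~\ref{r:AMAS} both transfer verbatim to the Milnor attractor.
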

\begin{proof}
The first claim follows from Lemma~\ref{l:proj}, because the criterion given there does not use the future half of the base coordinate. 
The second one follows from the first due to Remark~\ref{p:defAM}.   

\end{proof}
\begin{rem*}
For any partially hyperbolic diffeomorphism the statistical $\omega$-limit set of Lebesgue almost every point is saturated by unstable fibers (this follows from~\cite[Theorem~$11.16$]{BDV}). The same is true for the regular $\omega$-limit sets (\cite{MO}). Corollary~\ref{c:future} is an analogue of these statements for the case of the SSPs.
\end{rem*}

\begin{cor} \label{c:inv}
For almost every point~$y \in X$ the projection onto the fiber of the set $A=\oms(y)$  is forward-invariant under the action of all fiber maps~$f_i$.
\end{cor}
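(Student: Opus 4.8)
The plan is to deduce forward-invariance of the projection directly from the forward-invariance of the statistical $\omega$-limit set together with the ``future-independence'' property established in Corollary~\ref{c:future}. I would work on the full-measure set of points $y$ on which Corollary~\ref{c:future} holds, and write $A = \oms(y)$; recall that $\oms(y)$ is forward-invariant for \emph{every} $y$, so no further shrinking of the measure is required. It then suffices to show that for an arbitrary $q \in \pi_M(A)$ and an arbitrary fiber map $f_i$ one has $f_i(q) \in \pi_M(A)$.

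So fix such a $q$ and an index $i \in \{1, \dots, s\}$. By the definition of the projection there is some $\omega \in \Sig$ with $(\omega, q) \in A$. The key step is to notice that the zeroth symbol $\omega_0$ belongs to the \emph{future half} of $\omega$ (with the convention fixed just before Proposition~\ref{p:defPAM}), so Corollary~\ref{c:future} allows me to replace it freely: taking $\tilde\omega$ with the same past half as $\omega$ but with $\tilde\omega_0 = i$, we still have $(\tilde\omega, q) \in A$. Since $A$ is forward-invariant,
\[
F(\tilde\omega, q) = (\sigma\tilde\omega, f_{\tilde\omega_0}(q)) = (\sigma\tilde\omega, f_i(q)) \in A,
\]
and hence $f_i(q) \in \pi_M(A)$. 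As $q$ and $i$ were arbitrary, $\pi_M(A)$ is forward-invariant under all the fiber maps.

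I do not expect any real obstacle here, since all the substantive work is already contained in Corollary~\ref{c:future} (and, behind it, in Lemma~\ref{l:proj}). The only genuinely new observation is that prescribing $\omega_0$ is the same as prescribing which fiber map is applied first; future-independence therefore upgrades the automatic invariance of $\pi_M(A)$ under the single map $f_{\omega_0}$ into invariance under every $f_i$ simultaneously. The one point that must be checked with care is precisely the indexing convention --- that $\omega_0$ is counted as part of the future half --- for otherwise Corollary~\ref{c:future} would not license changing it.
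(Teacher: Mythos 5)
Your proof is correct and follows essentially the same route as the paper: pick a preimage $(\omega,q)\in A$, use Corollary~\ref{c:future} to change $\omega_0$ (which indeed lies in the future half) to $i$, and apply forward-invariance of $\oms(y)$. Your explicit check of the indexing convention for $\omega_0$ is a sensible precaution but introduces nothing beyond the paper's argument.
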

\begin{proof} 
Let $p \in \pi_M(A)$. Consider an arbitrary point $x=(\omega, p) \in A$ projected into $p$. By replacing $\omega_0$ with the symbol $i$, we can get a point $\tilde x$ that lies in~$A$ by Corollary~\ref{c:future}. Therefore, $F(\tilde x)$ lies in~$A$ too. Since $\pi_M(F(\tilde x))=f_i(p)$, we get $f_i(p) \in \pi_M (A)$.  
\end{proof}

\begin{thm} \label{t:proj}
Take any SSP such that its fiber maps are diffeomorphisms of an arbitrary compact manifold (possibly with boundary).
Then the statistical attractor can be reconstructed from its projection onto the fiber in the following way: the point $x$ belongs to $\As$ if and only if the projections of all preimages of this point belong to the projection of~$\As$. This statement also holds for the Milnor attractor. 
\end{thm}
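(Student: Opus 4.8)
The statement has two directions. The ``only if'' direction is immediate: since the fiber maps are diffeomorphisms onto~$M$, the map~$F$ is a homeomorphism, so $\As$ is backward invariant; hence if $x \in \As$ then $F^{-n}(x) \in \As$ and $\pi_M(F^{-n}(x)) \in \pi_M(\As)$ for every $n \ge 0$. All the content is in the ``if'' direction, and the plan is to lift the per-orbit reconstruction of Lemma~\ref{l:proj} to the whole attractor by combining the positive-measure criterion of Remark~\ref{p:defAM} with Proposition~\ref{p:defPAM} and the full-measure sets $Y(w, U_M)$ of Proposition~\ref{p:proj}.

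For the ``if'' direction I would argue as follows. Assume $\pi_M(F^{-n}(x)) \in \pi_M(\As)$ for all $n \ge 0$, where $x = (\omega, p)$. By Remark~\ref{p:defAM} it suffices to show that every cylindrical neighborhood $V = V_\Sig \times V_M$ of~$x$ meets $\oms(y)$ for a positive-measure set of~$y$. As in the proof of Lemma~\ref{l:proj}, write $F^{-n}(V) = U_w \times U_M$ with $U_M = (f^{-1}_{\omega_{-n}} \circ \dots \circ f^{-1}_{\omega_{-1}})(V_M)$ and $w = \omega_{-n}\dots\omega_{n-1}$; then $q := \pi_M(F^{-n}(x))$ lies in $U_M$, and by hypothesis $q \in \pi_M(\As)$. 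Choosing a basic open set $U_i$ with $q \in U_i \subset U_M$, Proposition~\ref{p:defPAM} yields a positive-measure set of points~$y$ with $\pi_M(\oms(y)) \cap U_i \ne \emptyset$, i.e.\ $\oms(y) \cap (\Sig \times U_i) \ne \emptyset$. Intersecting this set with the full-measure set $Y(w, U_i)$ and invoking the transfer property established inside the proof of Lemma~\ref{l:proj} (if $z \in Y(w, U_i)$ and $\oms(z)$ meets $\Sig \times U_i$, then $\oms(z)$ meets $U_w \times U_i$), I obtain a positive-measure set of~$y$ with $\oms(y) \cap (U_w \times U_i) \ne \emptyset$. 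Since $U_w \times U_i \subset F^{-n}(V)$ and $\oms(y)$ is invariant, this gives $\oms(y) \cap V \ne \emptyset$, which verifies the criterion of Remark~\ref{p:defAM} and hence places $x$ in~$\As$.

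The main obstacle is precisely this bridge between two different ``sizes'' of exceptional sets: the reconstruction of Lemma~\ref{l:proj} holds only on a full-measure set of orbits, whereas membership in~$\As$ is detected by a positive-measure condition (Remark~\ref{p:defAM}, Proposition~\ref{p:defPAM}). The resolution is that a positive-measure set intersected with a full-measure set is still of positive measure, so one can always select orbits~$y$ that simultaneously enjoy the transfer property of $Y(w, U_i)$ and witness the required intersection. Finally, the Milnor-attractor version follows verbatim by replacing statistical $\omega$-limit sets with the ordinary ones throughout, as noted in Remark~\ref{r:AMAS}.
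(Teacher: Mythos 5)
Your proof is correct, but it reaches Theorem~\ref{t:proj} by a different route than the paper. The paper's argument for the ``if'' direction is soft: for each $n$ it takes the point $q = \pi_M F^{-n}(x) \in \pi_M(\As)$, picks some $(\tilde \omega, q) \in \As$, and invokes Corollary~\ref{c:future} to arrange that the future half of $\tilde \omega$ agrees with that of $\sigma^{-n}\omega$; then $F^n(\tilde \omega, q)$ lies in $\As$ by invariance, has fiber coordinate exactly $p$, and converges to $x$ as $n \to \infty$, so closedness of $\As$ finishes the proof. You instead bypass Corollary~\ref{c:future} entirely and re-run the internal mechanism of the proof of Lemma~\ref{l:proj} at the level of the whole attractor, replacing the single-orbit reconstruction by the positive-measure criteria of Remark~\ref{p:defAM} and Proposition~\ref{p:defPAM}; your key observation --- that a positive-measure set of witnesses $y$ survives intersection with the full-measure set $Y(w, U_i)$, after which the transfer property pushes $\oms(y)$ into $U_w \times U_i \subset F^{-n}(V)$ --- is exactly the right bridge, and the transfer property you quote is the one the paper establishes. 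What the paper's route buys is brevity and the reusable Corollary~\ref{c:future}, which is needed again in Proposition~\ref{p:stproj} and in Section~\ref{s:stable}; what your route buys is a proof of Theorem~\ref{t:proj} that does not require first isolating Corollary~\ref{c:future}, at the cost of repeating the neighborhood-by-neighborhood argument. Both ultimately rest on the same probabilistic core, Proposition~\ref{p:proj}. One small point to make explicit: Remark~\ref{p:defAM} is stated for arbitrary neighborhoods of $x$, so you should say that cylindrical neighborhoods form a basis at $x$, which is why verifying the criterion on them suffices.
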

\begin{proof}
Take $x \in \As$. Since the attractor is invariant, all preimages of $x$ also belong to the attractor, so their projection onto the fiber lie in the projection of the attractor. 

Let us prove the inverse implication. Take a point $x = (\omega, p)$ such that all preimages of $x$ lie in the projection of the attractor and a number $n>0$. Then $F^{-n}(x) = (\sigma^{-n} \omega, q)$ where
\[
q = f_{\omega_{-n}}^{-1} \circ \dots \circ f_{\omega_{-1}}^{-1}(p).
\]
Since $q = \pi_M F^{-n}(x) \in \pi_M(\As)$, for some sequence $\tilde \omega$ we have $(\tilde \omega, q) \in \As$. By Corollary~\ref{c:future} we may assume that the future halves of $\tilde \omega$ and $\sigma^{-n} \omega$ coincide:
\[
\tilde \omega_0 = \omega_{-n}, \; \dots, \; \tilde \omega_{n-1} = \omega_{-1}, \; \dots
\]
Denote $y_n = (\tilde \omega, q)$. Then
\[
F^n(y_n) = (\sigma^n \tilde \omega, f_{\omega_{-1}} \circ \dots \circ f_{\omega_{-n}} (q))
= (\sigma^n \tilde \omega, p).
\]
The points $x$ and $F^n(y_n)$ have identical fiberwise coordinates, the future halves of their basewise coordinates coincide, and the past halves coincide up to the element with index $-n$. Thus, for $n \to \infty$ we have $F^n(y_n) \to x$. Since $\As$ is invariant, $F^n(y_n) \in \As$. Since the attractor is closed, we have $x \in \As$.

This proves Theorem~\ref{t:proj} for the statistical attractor. Similar statement for the Milnor attractor holds by Remark~\ref{r:AMAS}.
\end{proof}

\subsection{Stability of the projection} \label{s:stproj}
\begin{prop} \label{p:stproj}
For any neighborhood $U$ of the statistical attractor there exist a neighborhood~$U_M$ of its projection to the fiber and a number~$n \in \mathbb N$ such that $F^n(\Sig \times U_M) \subset U$.
\end{prop}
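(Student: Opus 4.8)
The plan is to first reduce the statement to the cleaner claim that the \emph{full tube} $\Sig\times P$ over the compact projection $P:=\pi_M(\As)$ can be pushed into $U$, i.e. that there is an $n$ with $F^n(\Sig\times P)\subset U$, and then to prove that claim by a compactness argument resting on the reconstruction result of Theorem~\ref{t:proj}. Granting the tube claim, the passage to an open neighborhood $U_M\supset P$ is soft: since $F$ is a homeomorphism of the compact space $X=\Sig\times M$, the set $F^{-n}(U)$ is open and contains the compact set $\Sig\times P$. Because $\Sig$ is compact, the tube lemma applies slicewise: for each $p\in P$ there is an open $W_p\ni p$ with $\Sig\times W_p\subset F^{-n}(U)$, and finitely many $W_{p_1},\dots,W_{p_k}$ already cover $P$; setting $U_M=\bigcup_j W_{p_j}$ yields a neighborhood of $P$ with $\Sig\times U_M\subset F^{-n}(U)$, which is exactly $F^n(\Sig\times U_M)\subset U$.

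A preliminary ingredient I would establish is that $P$ is forward invariant, i.e. $f_i(P)\subset P$ for every $i$. This follows from Corollary~\ref{c:inv}: by minimality of the statistical attractor one may write $\As=\overline{\bigcup_{y\in G}\oms(y)}$ for a full-measure set $G$ on which Corollary~\ref{c:inv} applies. Since $\pi_M$ is a continuous map of the compact space $X$, it is closed and satisfies $\pi_M(\overline S)=\overline{\pi_M(S)}$, so $P=\overline{\bigcup_{y\in G}\pi_M(\oms(y))}$. Each $\pi_M(\oms(y))$ is $f_i$-invariant, hence so is their union, and the closure of an $f_i$-invariant set is again $f_i$-invariant by continuity ($f_i(\overline S)\subset\overline{f_i(S)}\subset\overline S$); therefore $f_i(P)\subset P$.

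For the key step I would argue by contradiction: suppose that for every $n$ there is a point $x_n=F^n(z_n)\in F^n(\Sig\times P)\setminus U$ with $z_n=(\omega^{(n)},p^{(n)})$ and $p^{(n)}\in P$. The crucial observation is that all backward projections of $x_n$ up to time $n$ stay in $P$: for $0\le m\le n$ one has $F^{-m}(x_n)=F^{n-m}(z_n)$, whose fiber coordinate is the forward image $f_{\omega^{(n)}_{n-m-1}}\circ\dots\circ f_{\omega^{(n)}_0}(p^{(n)})$ of the point $p^{(n)}\in P$, and thus lies in $P$ by forward invariance. Now $X$ is compact and $X\setminus U$ is closed, so a subsequence $x_{n_k}$ converges to some $x^*\in X\setminus U$; in particular $x^*\notin\As$. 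On the other hand, fix any $m\ge 0$: for all $n_k\ge m$ we have $\pi_M F^{-m}(x_{n_k})\in P$, and passing to the limit, using continuity of $F^{-m}$ and $\pi_M$ together with closedness of $P$, gives $\pi_M F^{-m}(x^*)\in P$. As this holds for every $m\ge 0$, Theorem~\ref{t:proj} forces $x^*\in\As$, contradicting $x^*\notin\As$. Hence some $n$ satisfies $F^n(\Sig\times P)\subset U$, and combined with the reduction above this proves the proposition.

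I expect the genuine obstacle to be producing a \emph{single} $n$ that works uniformly over all base points $\tau\in\Sig$: the fiber slices of $U$ over different $\tau$ vary in an uncontrolled way, and there is no fiberwise contraction to exploit, so a direct slicewise estimate is hopeless. The compactness-plus-contradiction packaging is precisely what converts the slicewise convergence $\pi_M F^{-m}(x^*)\in P$ into the required uniform statement, with Theorem~\ref{t:proj} acting as the bridge that turns ``all backward projections lie in $P$'' back into membership in the attractor.
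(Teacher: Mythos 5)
Your proof is correct, but it takes a genuinely different route from the paper's. The paper argues directly and quantitatively: it fixes $\eps$ with $U_\eps(\As)\subset U$, pairs an arbitrary point $(\omega,p)$ with $p$ $\delta$-close to $\pi_M(\As)$ with an attractor point $(\omega_A,p_A)$ whose base coordinate has the \emph{same future half} as $\omega$ (Corollary~\ref{c:future}), chooses $n$ so that $\sigma^n\omega$ and $\sigma^n\omega_A$ are $\eps/2$-close, and then sets $\delta=\eps/(2L^n)$ where $L$ is a common Lipschitz constant of the fiber maps, so that the two orbits stay $\eps/2$-close along the fiber after $n$ steps; this produces $U_M$ and $n$ in one stroke and never mentions Theorem~\ref{t:proj}. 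You instead reduce to pushing the closed tube $\Sig\times\pi_M(\As)$ into $U$ (the tube-lemma reduction to an open $U_M$ is fine, since $F$ is a homeomorphism here), establish forward invariance of $\pi_M(\As)$ under every $f_i$ (essentially Corollary~\ref{c:inv} transported to $\As$ via minimality, which is valid), and then run a compactness-and-contradiction argument whose engine is the reconstruction Theorem~\ref{t:proj}: the limit $x^*$ of the escaping points has all backward fiber projections in $\pi_M(\As)$, hence lies in $\As$, a contradiction. Your approach buys freedom from all metric estimates (no Lipschitz constants, no explicit $\delta$ or $n$) at the price of invoking the heavier reconstruction theorem and an extra invariance lemma; the paper's approach is more elementary given Corollary~\ref{c:future} and yields the uniformity in $n$ for free because $n$ depends only on $\eps$. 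Both arguments are sound and both ultimately rest on the same structural fact, namely that membership in $\As$ is insensitive to the future half of the base coordinate.
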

\begin{proof} 
Recall that the distance on $X$ between two points $(\omega_1, p_1)$ and $(\omega_2, p_2)$ is defined as the sum of distances along the base and along the fiber: $\dist_\Sig(\omega_1, \omega_2) + \dist_M(p_1, p_2)$. 
Consider a number $\eps>0$ such that the neighborhood~$U$ contains~$U_\eps(\As)$.

Let $U_M$ be the $\delta$-neighborhood of $\pi(\As)$ where the small number $\delta$ will be specified later. Consider an arbitrary point $x=(\omega, p) \in \Sig \times U_M$. Consider the point $x_A=(\omega_A, p_A)\in\As$ such that $\dist_M(p, p_A) < \delta$. By Corollary~\ref{c:future}, it does not depend on the future half of the base coordinate whether a point belongs to the attractor. Therefore, we can assume that the future halves of the sequences~$\omega$ and~$\omega_A$ coincide. This coincidence implies that we can find an integer $n$, independent of~$x$, such that $\dist_\Sig(\sigma^n(\omega), \sigma^n(\omega_A)) < \eps/2$. 
Let $g=f_{\omega_{n-1}} \circ \dots \circ f_{\omega_0}$. Then the projections to the fiber of the points $F^n(x)$ and $F^n(x_A)$ are $g(p)$ and $g(p_A)$ respectively. Chose a number $L>0$ in such a way that every fiber map be $L$-Lipschitz. Then the map~$g$ will be $L^n$-Lipschitz. Now we can set $\delta=\eps/(2 L^n)$. Then it follows from $\dist_M(p, p_A) < \delta$ that the fiberwise distance between $F^n(x)$ and $F^n(x_A)$ is less than $\eps/2$. Since those points are also $\eps/2$-close basewise, we conclude that they are $\eps$-close in the metric on~$X$. Since $F^n(x_A) \in \As$, we have $F^n(x) \in U_\eps(\As) \subset U$.
\end{proof}

\begin{thm} \label{stproj}
The statistical attractor is Lyapunov stable if and only if its projection to the fiber is Lyapunov stable in sense of Definition~\ref{d:stproj}. This statement also holds for the Milnor attractor. 
\end{thm}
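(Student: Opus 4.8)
The plan is to prove the two implications of Theorem~\ref{stproj} separately, writing $B := \pi_M(\As)$ throughout, and then transfer the whole argument to the Milnor attractor via Remark~\ref{r:AMAS}. Before starting I would record two preliminary facts. Since $\As$ is compact and $\pi_M$ is continuous, $B$ is compact, $\Sig \times B$ is compact, and $\As \subset \Sig \times B$. Because the metric on $X$ is the sum of the base and fiber distances, the distance from $(\omega,p)$ to $\Sig \times B$ equals $\dist_M(p,B)$ (match the base coordinate), so the $\eps$-neighborhood of $\Sig\times B$ is exactly $\Sig \times U_\eps(B)$; hence it suffices to test Lyapunov stability of $\Sig\times B$ against \emph{product} neighborhoods. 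Second, $B$ is forward invariant under every fiber map: if $p\in B$, pick $(\omega,p)\in\As$; by Corollary~\ref{c:future} we may replace $\omega_0$ by any symbol $i$ without leaving $\As$, and forward invariance of $\As$ then gives $f_i(p)\in B$. In particular $\Sig\times B$ is forward invariant.

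For the implication \emph{($B$ stable $\Rightarrow \As$ stable)} I would argue as follows. Given a neighborhood $U$ of $\As$, Proposition~\ref{p:stproj} furnishes a neighborhood $U_M$ of $B$ and an $n$ with $F^n(\Sig\times U_M)\subset U$. Stability of $B$, applied to the neighborhood $\Sig\times U_M$, yields a smaller $W_M$ such that $\Sig\times W_M$ never leaves $\Sig\times U_M$ under forward iteration. To also control the first $n$ iterates I would use forward invariance of $\As$ together with continuity: since $F^k(\As)\subset\As\subset U$, each $F^k$ ($0\le k<n$) sends some neighborhood $U'_k$ of $\As$ into $U$. Then $V := (\Sig\times W_M)\cap\bigcap_{k<n}U'_k$ is a neighborhood of $\As$, and for $x\in V$ the images $F^k(x)$ with $k<n$ lie in $U$ by the $U'_k$, while for $k\ge n$ one has $F^{k-n}(x)\in\Sig\times U_M$ and hence $F^k(x)\in F^n(\Sig\times U_M)\subset U$. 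Thus $\As$ is Lyapunov stable.

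For the implication \emph{($\As$ stable $\Rightarrow B$ stable)} I fix $\eps>0$ and seek $\delta$ with $F^k(\Sig\times U_\delta(B))\subset \Sig\times U_\eps(B)$ for all $k$. The main obstacle is that $\Sig\times B$ is genuinely fatter than $\As$ in the base direction, so a product neighborhood $\Sig\times U_\delta(B)$ need not sit inside any preassigned neighborhood of $\As$, and stability of $\As$ cannot be invoked on it directly. Proposition~\ref{p:stproj} is precisely what bridges this gap. By stability of $\As$ I choose a neighborhood $V$ of $\As$ whose forward orbit stays in $U_\eps(\As)$, which lies in $\Sig\times U_\eps(B)$ because any point $\eps$-close to $\As$ has fiber coordinate $\eps$-close to $B$. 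Applying Proposition~\ref{p:stproj} to $V$ gives a neighborhood $U_M$ of $B$ and an $n$ with $F^n(\Sig\times U_M)\subset V$, so for $x\in\Sig\times U_M$ the tail $F^k(x)$, $k\ge n$, stays in $\Sig\times U_\eps(B)$. The initial segment $0\le k<n$ I would handle by forward invariance of $\Sig\times B$ and continuity, or concretely by the Lipschitz bound: writing $g=f_{\omega_{k-1}}\circ\dots\circ f_{\omega_0}$ and using that $g$ maps $B$ into $B$, one gets $\dist_M(g(p),B)\le L^{n}\delta$ for $k\le n$, where $L$ is a common Lipschitz constant. It then suffices to shrink $\delta$ so that $\Sig\times U_\delta(B)\subset \Sig\times U_M$ and $L^n\delta<\eps$; this $\delta$ works, proving $B$ is Lyapunov stable.

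Finally, Remark~\ref{r:AMAS} guarantees that Proposition~\ref{p:stproj} and Corollary~\ref{c:future} hold verbatim with $A_M$ in place of $\As$, so the identical argument proves the statement for the Milnor attractor. I expect the crux to be the second implication: without Proposition~\ref{p:stproj} there would be no way to feed an all-base product neighborhood into the Lyapunov stability of $\As$, since the attractor may be arbitrarily thin over individual fibers, and it is exactly this ``thinness'' that the projection onto the fiber erases.
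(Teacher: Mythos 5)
Your proposal is correct and follows essentially the same route as the paper: both implications are driven by Proposition~\ref{p:stproj} exactly as in the paper's proof, with the only differences being cosmetic (you control the first $n$ iterates via explicit continuity/Lipschitz estimates and forward invariance of $\pi_M(\As)$, where the paper simply truncates the neighborhood). The transfer to the Milnor attractor via Remark~\ref{r:AMAS} is also how the paper concludes.
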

\begin{proof}

First let us deduce the stability of projection from the stability of $\As$. Consider any neighborhood~$U_M$ of the projection of $\As$. Let us find a neighborhood~$U$ of the attractor such that $U\subset\Sig \times U_M$. Since the attractor is stable, it admits a neighborhood~$V$ whose images are contained $U$. Applying Proposition~\ref{p:stproj} to $V$, we get $V_M$ and $n$ such that $F^n(\Sig \times V_M) \subset V$. Thus for any~$N \ge n$ the set $F^N(\Sig \times V_M)$ is a subset of~$U$, and therefore a subset of $\Sig \times U_M$. Truncating the neighborhood~$V_M$ if necessary, one can assure that $F^N(\Sig \times V_M) \subset \Sig \times U_M$ for $N = 0, \dots, n-1$ also. The stability of the projection is proven.

Suppose now that the projection of $\As$ is stable. To prove that the attractor itself is stable, we need, given a neighborhood $U \supset \As$, to be able to construct a neighborhood~$V \supset \As$ such that every trajectory that starts inside~$V$ does not quit~$U$.

For that end, using Proposition~\ref{p:stproj}, let us find for our~$U$ a neighborhood~$U_M \supset \pi_M(\As)$ and a number~$n$ such that 
\begin{equation} \label{e:toA}
F^n(\Sig \times U_M) \subset U.
\end{equation}
By Definition~\ref{d:stproj}, there is a neighborhood~$V_M \supset \pi_M(\As)$ such that
\[
\forall k \ge 0 \quad F^k(\Sig \times V_M) \subset \Sig \times U_M.
\]
Therefore,
\begin{equation} \label{e:stproj}
\quad F^{n+k}(\Sig \times V_M) \subset F^n(\Sig \times U_M).
\end{equation}

Consider $V=\Sig \times V_M$.  Due to~\eqref{e:toA} and~\eqref{e:stproj}, for any $m \ge n$ we have $F^m(V) \subset U$. Truncating~$V$ if necessary, we can assure that this also holds for $m < n$.

Similar statement for the Milnor attractor holds by Remark~\ref{r:AMAS}.
\end{proof}

\section{Stability of attractors for SSPs with the cirle fiber} \label{s:stable}

Fix integers $r, s \in \mathbb N, \; s \ge 2$. 
Let $T~=~(\Diff_+^r(\ci))^s$ be a set of all SSPs over the Bernoulli shift with the fiber~$\ci$ such that the fiber maps~$f_1, \dots, f_s$ are orientation preserving $C^r$-diffeomorphisms of a circle. 

Recall that a subset~$R$ of a topological space~$T$ is called \emph{residual} if it contains a countable intersection of everywhere dense open sets; one usually also assumes that~$T$ is \emph{a Baire space}, i.e., that any residual subset is everywhere dense in~$T$.
Let us prove that~$T$ is a Baire space. The space~$(C^r(\ci))^s$ is complete, and by the Baire theorem any complete metric space is a Baire space. On the other hand, $T$ is an open subset of this space, and any open subset of a Baire space is itself a Baire space.

\begin{thm} \label{t:stable}
There is a residual set $R \subset T$ such that for any SSP from~$R$ the statistical attractor is Lyapunov stable and coincides with the Milnor attractor. 
\end{thm}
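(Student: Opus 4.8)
The plan is to reduce the whole statement to a question about the projection $B:=\pi_M(\As)$ of the statistical attractor onto the fiber, using the two structural theorems of the previous section. By Theorem~\ref{stproj}, $\As$ is Lyapunov stable if and only if $B$ is Lyapunov stable in the sense of Definition~\ref{d:stproj}; and by Theorem~\ref{t:proj} both $\As$ and $A_M$ are recovered from their projections, so $\As=A_M$ is equivalent to $\pi_M(\As)=\pi_M(A_M)$. I would first observe that Lyapunov stability of $\As$ already forces $\As=A_M$: Lyapunov stability is equivalent to the existence of arbitrarily small forward-invariant neighborhoods of $\As$, and once the orbit of a point $x$ with $\oms(x)\subset\As$ (which holds for $\mu_X$-a.e.\ $x$) enters such a neighborhood --- it does, since $\oms(x)\neq\varnothing$ and its points are visited infinitely often --- it never leaves, whence $\om(x)$ lies in every neighborhood of $\As$ and therefore in $\As$; thus $A_M\subset\As$, the reverse inclusion being automatic. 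Consequently it suffices to produce a residual set on which $B$ is Lyapunov stable. By Corollary~\ref{c:inv}, $B$ is a closed set forward-invariant under all fiber maps $f_1,\dots,f_s$, and its Lyapunov stability is equivalent to $B$ admitting a fundamental system of open neighborhoods $O$ with $f_i(O)\subset O$ for every $i$.

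Second, I would invoke the dichotomy of~\cite{KKO}: on an open dense subset $T_0\subset T$ each SSP is either minimal (the orbit of every point of $\ci$ under the semigroup generated by $f_1,\dots,f_s$ is dense) or admits a common absorbing domain $W$ that is a finite union of arcs. In the minimal case the only nonempty closed forward-invariant subset of $\ci$ is $\ci$ itself, so $B=\ci$, hence $\As=A_M=X$, which is trivially Lyapunov stable. This settles the minimal stratum, and it remains to treat SSPs with an absorbing domain, where after restricting to the arcs of $W$ the fiber maps become orientation-preserving diffeomorphisms of intervals into their images, so that the problem becomes one about such interval semigroups.

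Third --- the heart of the matter --- I would show that, residually within the absorbing stratum, $B$ has arbitrarily thin forward-invariant neighborhoods. The idea is to write the desired set as a countable intersection $\bigcap_N G_N$ of open dense sets, where $G_N$ consists of the SSPs for which some open set $O$, taken from a fixed countable family (finite unions of arcs with endpoints in a countable dense set), satisfies the finitely many strict inclusions $\overline{f_i(W)}\subset O$ and $\overline{f_i(O)}\subset O$ for all $i$, together with a smallness requirement placing $\overline O$ within Hausdorff distance $1/N$ of the intersection of all admissible such $O$. The strict inclusions form an open condition on the fiber maps; and whenever they hold, $\Sig\times O$ is forward-invariant and absorbs $\mu_X$-almost every orbit (an orbit first enters $\Sig\times W$ by the absorbing property, then lands in $\Sig\times O$ and remains there), which forces $\pi_M(\As)\subset\overline O$. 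A cofinal family of such $O$ shrinking to $B$ therefore yields precisely the fundamental system of forward-invariant neighborhoods we need. The genuinely hard step is density of $G_N$: given an arbitrary SSP one must, after an arbitrarily small $C^r$-perturbation, manufacture such a thin forward-invariant neighborhood.

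I expect this density step to be the main obstacle. The obstruction to stability is exactly a \emph{semistable} configuration at a boundary point $c$ of a complementary arc of $B$, where the semigroup pushes points lying just outside $B$ away from $B$ rather than back into it --- the circle analogue of the semistable fixed point $x\mapsto x+0.1(1-\cos x)$ from the Introduction --- and one must perturb so that the semigroup contracts a neighborhood of $c$ into $B$. The difficulty is twofold: this must be arranged for the entire infinite semigroup of compositions simultaneously, not merely for the generators; and $B$ itself, hence the location of the boundary points to be treated, varies discontinuously with the SSP, so the perturbation and the resulting thin neighborhood must be built with only semicontinuous control of $B$. This is exactly why the conclusion is residual rather than open and dense. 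Finally, taking $R$ to be the intersection of the open dense set $T_0$ with the residual set $\bigcap_N G_N$ gives a residual set on which $B$, and hence $\As$, is Lyapunov stable and on which, by the first paragraph, $\As=A_M$.
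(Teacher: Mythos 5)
There is a genuine gap. Your reduction is sound as far as it goes: passing to the projection via Theorem~\ref{stproj}, the observation that Lyapunov stability of $\As$ forces $A_M=\As$, and the use of Corollary~\ref{c:inv} to see that $B=\pi_M(\As)$ is closed and forward-invariant under all fiber maps all match the paper. But the entire content of the theorem is the claim that $B$ is residually Lyapunov stable, and at exactly that point you stop: you define the sets $G_N$ and then write that their density is ``the main obstacle'' and describe why it is hard (one must control the whole semigroup at once, and $B$ varies discontinuously with the SSP). Those difficulties are real and your proposed direct-perturbation scheme does not overcome them; in particular, since $\pi_M(\As)$ can jump under perturbation, there is no reason the thin invariant neighborhood you build for the perturbed map has anything to do with the attractor of the original one, and no mechanism in your outline produces the required perturbation in the semistable configurations you correctly identify as the obstruction.

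The paper circumvents precisely this by never trying to perturb away instability. It first restricts to the open dense set where $f_1$ is Morse--Smale, so that the sinks $a_i$ of $f_1$ persist under perturbation, and shows (using the genericity condition that $f_2$ moves periodic points of $f_1$ off the periodic set, plus Corollary~\ref{c:inv}) that for a.e.\ $x$ the projection $\pi_\ci(\oms(x))$ equals $\orb(a)$ for some sink $a$; hence $\pi_\ci(\As)$ is a finite union of such sets. Stability of each $\orb(a)$ is then obtained by the semicontinuity lemma (Lemma~\ref{Baire}): the map $F\mapsto\orb_F(a_F)$ is lower semicontinuous, hence continuous on a residual set, and Proposition~\ref{stcont} shows that Lyapunov instability of $\orb(a)$ forces discontinuity of this map --- so at continuity points the set is automatically stable. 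This is why the conclusion is residual: one perturbs only to \emph{witness discontinuity at unstable configurations}, which is much easier than perturbing to \emph{create stability}. That anchoring on persistent sinks and the semicontinuity trick are the missing ideas in your proposal. (Two secondary remarks: your treatment of the minimal stratum leans on the dichotomy of~\cite{KKO}, which is only ``in preparation'' and is not used in the paper's proof; and the minimal case is in any event subsumed by the paper's argument, since there $\orb(a)=\ci$ for every sink $a$.)
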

\begin{proof}
We will consider only SSPs for which the fiber map~$f_1$ is a Morse--Smale diffeomorphism (i.e., $f_1$ has finitely many periodic orbits, and these orbits are hyperbolic). Denote by $S \subset T$ the set of SSPs with this property. Since Morse--Smale diffeomorphisms form an open and everywhere dense subset of $\Diff_+^r(\ci)$, the set $S \subset T$ is open and everywhere dense. 

Let $\attr$ be the set of all sinks of the fiber map~$f_1$. For $a \in \attr$ denote by $\orb(a)$ the closure of the orbit of the point~$a$ under the action of the semigroup generated by the fiber maps $f_1, \dots, f_s$.

Define $R \subset S$ as the set of all SSPs for which the following genericity conditions hold: 
\begin{enumerate}\setlength\itemsep{-0.1em}
\item \label{i:f1f2}
for any periodic point $y$ of the map $f_1$, the point $f_2(y)$ is not a periodic point of $f_1$;
\item \label{i:stable}
for any $a \in \attr$ the set $\orb(a)$ is Lyapunov stable in the sense of Definition~\ref{d:stproj}.
\end{enumerate}
It is clear that the first condition provides an open and everywhere dense subset of~$S$. The second one defines a residual subset of of $S$, which will be proved in section~\ref{s:orb} below. Since $S \subset T$ is open and dense, $R$ is a residual subset of~$T$. 

Now let us consider an arbitrary SSP $F \in R$ and prove that its attractor is stable.
Let $\pi_\ci: X \to \ci$ be the projection to the fiber, as usual.

\begin{prop}
For a $\mu_X$-a.e. point $x \in X$ the set $\pi_\ci(\oms(x))$ coincides with $\orb(a)$ for some $a \in \attr$.
\end{prop}
\begin{proof}
Given points $x, y \in \ci$, we will write $x \to y$ if for any neighborhood~$U$ of the point~$y$ there exists a composition~$g$ of the fiber maps~$f_i$ such that $g(x) \in U$.

It follows from the first genericity condition that for any point $y \in \ci$ there is a sink $a \in \attr$ such that $y \to a$. Indeed, if~$y$ is not a repeller of the map~$f_1$,  then when $f_1$ is applied iteratively $y$ is attracted to one of the periodic attractors of~$f_1$. If~$y$ is a repeller, then the same argument can be applied to the point~$f_2(y)$.

By Corollary~\ref{c:inv}, for a $\mu_X$-a.e. point $x \in X$ the set $\pi_\ci(\oms(x))$ is forward-invariant under the fiber maps. Fix such a point~$x$ and let $\Omega=\pi_\ci(\oms(x))$.
Take an arbitrary point $y \in \Omega$ and find a sink $a \in \attr$ such that $y \to a$. Let us prove that $\Omega = \orb(a)$.

Since the set $\Omega$ is invariant, it follows from~$y \to a$ that there are points of $\Omega$ arbitrarily close to~$a$. Since $\Omega$ is closed, this implies $a \in \Omega$. Applying the fact that $\Omega$ is closed and invariant once again, we get $\orb(a) \subset \Omega$.

To prove the inverse inclusion $\Omega \subset \orb(a)$, we will use the Lyapunov stability (in the sense of Definition~\ref{d:stproj}) of the set $\orb(a)$; recall that this stability is provided by the second genericity condition. We will argue by contradiction. Suppose that $z \in \Omega \setminus \orb(a)$. Consider a neighborhood $U \supset \orb(a)$ that does not contain~$z$. Given $U$, find a neighborhood $V \supset \orb(a)$ such that all trajectories that start inside $\Sig \times V$ never leave $\Sig \times U$. But this is in contradiction with the fact that the set $\oms(x)$ intersects $\Sig \times V$ and contains~$z$.
\end{proof}

Thus, for almost every point $x$ we have $\pi_\ci (\oms(x)) = \orb(a)$ for some $a \in \attr$. It follows from Proposition~\ref{p:defPAM} that the projection of the statistical attractor is the union of several sets of the form $\orb(a), \; a \in \attr$, namely, those that coincide with $\pi_\ci (\oms(x))$ for a set of points $x$ that has positive measure. Hence, the projection of the statistical attractor to the fiber is stable, which by Theorem~\ref{stproj} implies that $\As$ itself is stable. If the statistical attractor is Lyapunov stable, it always coincides with the Milnor attractor; the proof of this simple fact can be found in~\cite[Lemma~$5.3$]{Oku}.  
\end{proof}

Now let us deduce from Theorem~\ref{t:stable} an analogous statement for the SSPs with the segment fiber.
Denote by $\Diff_+^r(I)$ the set of orientation preserving $C^r$-smooth maps of a segment~$I$ inside itself that are diffeomorphisms onto the image. Let $T_I = (\Diff_+^r(I))^s$ be the space of all SSPs with the fiber a segment and with $s$ fiber maps, where $s \ge 2$ is arbitrary.

\begin{cor} \label{c:interval}
There exists a residual set $R_I \subset T_I$ such that for any SSP from $R_I$ the statistical attractor is Lyapunov stable and coincides with the Milnor attractor. 
\end{cor}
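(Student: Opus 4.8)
The plan is to reduce the segment case to the already established circle case (Theorem~\ref{t:stable}) by embedding the fibre $I$ into a circle and extending each fibre map to a circle diffeomorphism. Fix once and for all an embedding of $I$ as a proper closed arc of $\ci$, and let $J=\ci\setminus I$ be the complementary open arc. Given $F=(f_1,\dots,f_s)\in T_I$, each $f_i$ sends $I$ into itself and is a diffeomorphism onto its image, so it admits an extension $\hat f_i\in\Diff^r_+(\ci)$ with $\hat f_i|_I=f_i$. I would fix a smooth extension procedure, giving a continuous map $E\colon T_I\to T$, $F\mapsto\hat F=(\hat f_1,\dots,\hat f_s)$, and arrange the extension on $J$ so that the arc $J$ is dynamically repelling: the cleanest version is to make all $\hat f_i$ share a common repelling fixed point $c\in J$ pushing $J\setminus\{c\}$ monotonically towards $\partial I$. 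Since $f_i(I)\subset I$, the set $\Sig\times I$ is forward invariant under $\hat F$, and for $\mu_X$-a.e.\ point of $\Sig\times\ci$ the forward orbit enters $\Sig\times I$; with a uniformly expanding common repeller this escape is immediate, the fibrewise distance to $c$ growing geometrically until the orbit leaves $J$, so only the null set $\Sig\times\{c\}$ fails to enter.

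Granting the escape, I would identify the attractors of $\hat F$ with those of $F$. Because statistical $\omega$-limit sets are invariant under finite iteration and $\Sig\times I$ is forward invariant, for a.e.\ $x$ the statistical $\omega$-limit set of $x$ under $\hat F$ coincides with that of $\hat F^{n(x)}(x)$, where $n(x)$ is the entry time into $\Sig\times I$ and the latter orbit evolves under $F$; in particular $\oms(x)\subset\Sig\times I$ and $\As(\hat F)\subset\Sig\times I$. The entry map $x\mapsto\hat F^{n(x)}(x)$ is nonsingular, since the shift preserves $\mu_\Sig$ and the fibre maps are diffeomorphisms, hence absolutely continuous, so it carries $\mu_X$-null sets to null sets on each piece of the partition by entry time; comparing the full-measure sets on $\Sig\times\ci$ and on $\Sig\times I$ then yields $\As(\hat F)=\As(F)$ and, by Remark~\ref{r:AMAS}, $A_M(\hat F)=A_M(F)$, all regarded as subsets of $\Sig\times I$. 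Lyapunov stability in the larger space $\Sig\times\ci$ restricts to stability in $\Sig\times I$, because forward orbits never leave $\Sig\times I$ and neighbourhoods in $\Sig\times I$ are restrictions of neighbourhoods in $\Sig\times\ci$; likewise $\As(\hat F)=A_M(\hat F)$ descends to $\As(F)=A_M(F)$. Thus whenever $\hat F$ lies in the residual set $R$ of Theorem~\ref{t:stable}, the conclusion of the Corollary holds for $F$.

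It remains to produce $R_I\subset T_I$; I would set $R_I=E^{-1}(R)$ and check residuality by pulling back the explicit conditions defining $R$. Morse--Smaleness of $\hat f_1$ is open, and since the extension on $J$ contributes only the hyperbolic point $c$, it corresponds to $f_1$ having only hyperbolic periodic points on $I$, an open and dense condition in $\Diff^r_+(I)$. Condition~\ref{i:f1f2} for $\hat F$ at periodic points lying in $I$ reduces to the same open and dense condition for $F$. Finally, condition~\ref{i:stable}, the Lyapunov stability (in the sense of Definition~\ref{d:stproj}) of each $\orb(a)$, is residual; for the sinks $a$ of $\hat f_1$, which by construction all lie in $I$ so that $\orb(a)\subset I$, this is exactly the residual genericity transferred from the circle case. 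As $E$ is continuous and each defining condition pulls back to an open-dense or residual set, $R_I=E^{-1}(R)$ is residual in the Baire space $T_I$.

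The hard part is the tension between the escape requirement and condition~\ref{i:f1f2}. A degree/intermediate-value argument on the lift shows that every extension $\hat f_i$ must carry at least one fixed point in $J$; collapsing these to a single common repelling point $c$ gives the cleanest escape but forces $\hat f_2(c)=c$ with $c$ periodic for $\hat f_1$, violating~\ref{i:f1f2}. The delicate step is therefore to choose the extension so that $c$ stays a repelling fixed point of $\hat f_1$ while $\hat f_2(c)$ is pushed into the interior of $I$ at a non-periodic point (restoring~\ref{i:f1f2}), and to prove that $\mu_X$-a.e.\ orbit still reaches $\Sig\times I$ — now through a uniform-escape and Borel--Cantelli estimate rather than a single monotone push, since the fibre maps no longer share a common fixed point in $J$. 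Establishing this escape, and verifying that the adapted procedure $E$ is still continuous with $E^{-1}(R)$ residual, is where the genuine work lies.
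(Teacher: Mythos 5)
Your reduction of the segment case to Theorem~\ref{t:stable} by extending the fibre maps to circle diffeomorphisms is the right general idea, and your identification of the attractors of $\hat F$ and of $F$ (forward invariance of $\Sig\times I$, a.e.\ entry, nonsingularity of the entry map) is sound --- in fact more detailed than what the paper writes. The genuine gap is the last step: you set $R_I=E^{-1}(R)$ and claim residuality ``as $E$ is continuous''. Preimages of residual sets under continuous maps are \emph{not} residual in general --- a continuous map can send a whole open set into the complement of a residual set --- and the image of your extension operator $E$ is a thin (presumably nowhere dense) subset of $T$, so nothing prevents the genuinely residual part of $R$, namely condition~\ref{i:stable}, from missing $E(T_I)$ over a fat set. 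The open-and-dense conditions (Morse--Smale $f_1$, condition~\ref{i:f1f2}) do pull back to open sets, and with some work to dense ones, but condition~\ref{i:stable} is manufactured by the semicontinuity lemma inside $T$; to transfer it you would have to rerun the whole argument of Lemma~\ref{l:stable} on $T_I$ (lower semicontinuity of $F\mapsto\orb_{\hat F}(\tilde a_i)$ as a map on $T_I$, plus the instability-implies-discontinuity perturbation of Proposition~\ref{stcont}, which must now be realized by perturbations lying in the image of your specific $E$ --- something your construction need not accommodate). You flag this yourself in the last paragraph, but it is precisely where the proof lives, not a routine verification.

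The paper resolves this by running the correspondence in the opposite direction. It takes $\tilde T\subset T$, the \emph{open} set of circle SSPs whose fibre maps send the arc $I$ strictly into itself, so that $\tilde R=\tilde T\cap R$ is automatically residual in $\tilde T$, and considers the restriction map $\Pi\colon\tilde T\to T_I$, $\tilde F\mapsto \tilde F|_{\Sig\times I}$. This $\Pi$ is a continuous \emph{open} surjection, and by a lemma of Phelps a continuous open surjection from a complete metric space onto a Hausdorff space carries residual sets to residual sets; hence $R_I:=\Pi(\tilde R)$ is residual. Going in this direction also dissolves the tension you wrestle with between the forced fixed point of each $\hat f_i$ in $J=\ci\setminus I$ and condition~\ref{i:f1f2}: one does not need a canonical, continuous, genericity-preserving extension procedure, only that each $F\in R_I$ admits \emph{some} extension lying in $R$, which holds by construction of $R_I$ as an image. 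I would rework your argument along these lines: prove that $\Pi$ is open and surjective (this is where your extension construction is actually needed, but only pointwise and locally, with no global continuity or genericity requirements), and keep your attractor-identification paragraph essentially as is.
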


\begin{proof}
Let us regard the segment $I$ as an arc of a circle: $I \subset \ci$. Let $\tilde T \subset T$ be the set of SSPs with the fiber a circle such that all fiber maps send the arc~$I$ strictly into itself, and $\tilde R \subset \tilde T$ be the intersection of $\tilde T$ with the residual subset $R$ from Theorem~\ref{t:stable}.

Consider the map $\Pi\colon \tilde T \to T_I$ that maps a SSP with the fiber a circle into its restriction to $\Sig\times I$. According to~\cite[Lemma~$4.25$]{Phe}, any continuous open surjection from a complete metric space onto a Hausdorff space takes residual sets to residual sets. Thus, the set $R_I := \Pi(\tilde R) \subset T_I$ is residual. 

Consider an arbitrary SSP with the fiber a segment $F \in R_I$. It can be extended as an SSP with the fiber a circle $\tilde F \in \tilde R$. By Theorem~\ref{t:stable} the statistical attractor of~$\tilde F$ is Lyapunov stable and coincides with the Milnor attractor. Since the statistical and the Milnor attractors of the SSP~$F$ can be obtained as the intersections of the corresponding attractors of~$\tilde F$ to the dissipative set~$\Sig\times I$, the statistical attractor of~$F$ is Lyapunov stable and coincides with the Milnor attractor too.
\end{proof}

\subsection{Sink orbits closures are stable}\label{s:orb}
\begin{lem} \label{l:stable}
Skew products $F \in S$ such that for any $a \in \attr$ the set $\orb(a)$ is Lyapunov stable form a residual subset of~$S$.
\end{lem}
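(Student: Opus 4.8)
The goal is to show that the SSPs $F\in S$ for which $\orb(a)$ is Lyapunov stable (in the sense of Definition~\ref{d:stproj}) for every sink $a\in\attr$ form a residual subset of~$S$. The natural strategy is to realize the desired set as a countable intersection of open dense subsets of~$S$. Since $f_1$ is Morse--Smale on $S$, the set $\attr$ of sinks is finite and varies ``nicely'' (locally, the number of sinks is constant and each sink depends continuously on $F$), so it suffices to control the stability of a single $\orb(a)$ and then intersect over the finitely many sinks. The Lyapunov stability of $\orb(a)$ means that for every neighborhood $U\supset\orb(a)$ in the circle there is a smaller neighborhood $V$ whose forward semigroup orbit stays in $U$; equivalently, $\orb(a)$ admits arbitrarily small forward-invariant (under all the $f_i$) neighborhoods.

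First I would set up a countable family of ``test'' conditions. Fix a countable base $\{U_j\}$ of open neighborhoods of the (finitely many possible) sink positions, or more robustly, parametrize neighborhoods by rational radii. For each sink $a$ and each target neighborhood $U$ from this countable list, consider the set $\mathcal O(a,U)$ of SSPs for which there exists a neighborhood $V\supset\orb(a)$ with $f_i(V)\subset U$ for all $i$, i.e. a genuinely forward-invariant neighborhood trapped inside $U$. The plan is to show each such condition defines an open set, and that the union over a dwindling sequence of $U$'s is dense; then the residual set is obtained by the standard diagonal intersection over the countable data. Openness should follow because the existence of a forward-invariant neighborhood $V$ with $\overline{f_i(V)}\subset U$ is a robust (strict-inclusion) condition: small $C^r$-perturbations of the $f_i$ preserve both $\overline{f_i(V)}\subset U$ and the fact that $\orb(a)\subset V$, since the sink $a$ and hence $\orb(a)$ move continuously and $\orb(a)$ is contained in the interior.

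The main work, and the hard part, is \emph{density}: given an arbitrary $F\in S$ and a neighborhood $U\supset\orb(a)$, one must perturb $F$ slightly so that $\orb(a)$ acquires a forward-invariant neighborhood inside $U$. The obstruction is that $\orb(a)$ is the closure of a semigroup orbit and may a priori have a complicated structure (it is a closed invariant set that need not be an attractor for the semigroup action, and its boundary behavior controls stability). The idea I would pursue is to exploit the one-dimensionality of the fiber: $\orb(a)$ is a closed subset of $\ci$, hence its complement is a countable union of open arcs, and forward invariance under the $f_i$ constrains how the endpoints of these arcs map. A perturbation supported near the ``unstable side'' of $\orb(a)$ can be used to push the images of a candidate neighborhood strictly inward, using that near the sink $a$ itself one already has local contraction (it is a hyperbolic sink of $f_1$), and that the first genericity condition keeps the combinatorics of periodic points nondegenerate. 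Concretely, I expect to build $V$ as a small neighborhood and then adjust the $f_i$ by $C^r$-small perturbations near the finitely many endpoints of the arcs complementary to $\orb(a)$ that lie closest to $\orb(a)$, arranging $f_i(\overline V)\subset V$; one-dimensionality guarantees that only finitely many such ``escaping'' endpoints matter within the prescribed neighborhood $U$, which makes the perturbation finite and controllable. This endpoint/arc analysis, and verifying that the perturbation can be made simultaneously compatible for all fiber maps and all sinks without destroying the Morse--Smale property of $f_1$, is where the real difficulty lies.
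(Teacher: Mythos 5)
There is a genuine gap: your argument reduces everything to a density claim that you do not prove, and the sketch you give for it does not work. Your plan is to show that, for each sink $a$ and each scale, the SSPs admitting a small forward-invariant neighborhood of $\orb(a)$ form an open \emph{dense} set. The openness part is plausible (a forward-invariant $V$ with $\overline{f_i(V)}\subset V$ persists under small perturbations and automatically traps the perturbed orbit closure). But the density part is exactly the hard content of the lemma, and your proposed mechanism --- perturbing the $f_i$ near ``the finitely many escaping endpoints of the arcs complementary to $\orb(a)$'' --- fails on two counts. First, $\orb(a)$ is the closure of a semigroup orbit and can be a Cantor set, so its complement may consist of infinitely many arcs with escaping behaviour at infinitely many scales; nothing restricts the relevant endpoints to a finite set inside $U$. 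Second, and more fundamentally, the construction is circular: any perturbation of the $f_i$ changes $\orb(a)$ itself (the map $F\mapsto\orb_F(a_F)$ is only lower semi-continuous, so the set can jump to something much larger), and you have no control guaranteeing that the \emph{perturbed} orbit closure sits inside the neighbourhood you built for the unperturbed one. Note that the paper explicitly states it obtains only a residual set, not an open dense one; an open-dense statement at each scale is precisely what your route would require, and it is not established.

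The paper avoids density altogether. It considers the map $O_i\colon F\mapsto\orb_F(a_i)$ into the closed subsets of the circle with the Hausdorff metric, shows it is lower semi-continuous (an $\eps$-net of $\orb(a_i)$ by finitely many words $w_j(a_i)$ moves continuously with $F$), and invokes the semicontinuity lemma: the continuity points of a lower semi-continuous map form a residual set. The punchline is the converse direction you are missing: if $\orb(a_i)$ is Lyapunov unstable at $F$, then $O_i$ is \emph{discontinuous} at $F$. This is proved by an explicit one-parameter perturbation $F_c=\{f_1+c,\dots,f_s+c\}$, $c\in[0,2\delta]$, together with the intermediate value theorem, which produces an arbitrarily close SSP whose orbit closure actually reaches a fixed open set $U$ disjoint from $\orb(a_i)$ --- a definite Hausdorff jump. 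Hence stability holds at every continuity point, which is a residual set, and a local-to-global patching finishes the proof. If you want to salvage your approach, you would have to replace your density heuristic by an argument of comparable strength; as written, the proposal does not prove the lemma.
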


The proof is in many respects similar to the proof of Theorem~$6.1$ from~\cite{MP}. It is based on the well-known semicontinuity lemma. Let us recall the statement of this lemma.

\begin{defin} A map $O$ from a metric space $T$ to the set of all closed subsets of a compact space~$X$ is called \emph{lower semi-continuous} at the point $F \in T$ if for any $\eps$ there is $\delta$ such that if $d(F, F')<\delta$, then for every point $x \in O(F) \subset X$ in the set $O(F')$ there is a point $\eps$-close to~$x$.
\end{defin}

\begin{lem} (The semicontinuity lemma)
\label{Baire}
The set of continuity points (with respect to the Hausdorff metric on the image) of a lower semi-continuous map is residual.
\end{lem}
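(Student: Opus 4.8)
The plan is to reduce the set-valued continuity question to a countable family of real (in fact $\{0,1\}$-valued) lower semi-continuous indicator functions and then apply the Baire category theorem directly. Write $O\colon T \to \mathcal K(X)$ for the given map, where $\mathcal K(X)$ denotes the closed subsets of the compact metric space $X$ equipped with the Hausdorff metric. Since the distance $d_H(O(F'),O(F))$ is the maximum of the two one-sided quantities $\sup_{a\in O(F)}\dist(a,O(F'))$ and $\sup_{b\in O(F')}\dist(b,O(F))$, the first of which is controlled everywhere by the lower semi-continuity hypothesis, continuity of $O$ at a point $F$ is equivalent to upper semi-continuity at $F$. Hence it suffices to prove that the points where $O$ fails to be upper semi-continuous form a meager set.

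First I would fix a countable base $\{V_k\}_{k\in\NN}$ of the topology of $X$, which exists because a compact metric space is second countable, and for each $k$ introduce
\[
U_k = \{F \in T : O(F) \cap V_k \neq \emptyset\}.
\]
The key point is that every $U_k$ is open, and this is exactly where lower semi-continuity enters: if $x \in O(F) \cap V_k$ and $B(x,\eps)\subset V_k$, then lower semi-continuity at $F$ furnishes a $\delta$ such that each $F'$ with $d(F,F')<\delta$ has a point of $O(F')$ within $\eps$ of $x$, hence inside $V_k$, so $F' \in U_k$.

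The heart of the argument is to show that any point $F$ at which $O$ fails to be upper semi-continuous lies in $\partial U_k$ for some $k$. Assume $O$ is not upper semi-continuous at $F$: then there are $\eps_0>0$, a sequence $F_n \to F$, and points $x_n \in O(F_n)$ with $\dist(x_n, O(F)) > \eps_0$. Using compactness of $X$, I would pass to a subsequence along which $x_n \to x$; then $\dist(x, O(F)) \ge \eps_0$, so $x \notin O(F)$, and I can choose a basic open set $V_k$ with $x \in V_k \subset B(x,\eps_0/2)$. This $V_k$ misses $O(F)$, so $F \notin U_k$, while $x_n \in O(F_n)\cap V_k$ for large $n$ gives $F_n \in U_k$; since $F_n \to F$, it follows that $F \in \overline{U_k}\setminus U_k = \partial U_k$.

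To conclude, recall that the boundary of an open set is closed with empty interior, hence nowhere dense, so $\bigcup_k \partial U_k$ is meager and $\bigcap_k (T\setminus\partial U_k)$ is a countable intersection of dense open sets, i.e.\ residual. By the previous paragraph the discontinuity points of $O$ are contained in $\bigcup_k\partial U_k$, so the set of continuity points contains this residual set and is therefore itself residual. I expect the main obstacle to be the third step: extracting from a single failure of upper semi-continuity one limit point $x$ together with one basic set $V_k$ that simultaneously separates $O(F)$ from the approaching points $x_n$ and eventually contains them. This is precisely where compactness of $X$ is indispensable, and some care is needed to pick $\eps_0/2$ small enough that $V_k$ both avoids $O(F)$ and captures the tail of the sequence.
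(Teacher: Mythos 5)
Your proof is correct, and it is worth noting that the paper itself does not prove Lemma~\ref{Baire} at all: it simply cites \cite{Kur} and \cite{San}, so your argument is a genuine self-contained replacement rather than a variant of an in-paper proof. Each step checks out: the reduction of continuity to upper semi-continuity is valid because the lower semi-continuity hypothesis controls the one-sided distance $\sup_{a\in O(F)}\dist\bigl(a,O(F')\bigr)$ at every point, in particular at the point $F$ under consideration; the sets $U_k=\{F: O(F)\cap V_k\neq\emptyset\}$ are open precisely by lower semi-continuity; and the compactness of $X$ lets you extract the limit point $x$ witnessing a failure of upper semi-continuity, place it in a basic set $V_k\subset B(x,\eps_0/2)$ disjoint from $O(F)$, and conclude $F\in\overline{U_k}\setminus U_k=\partial U_k$. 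Since boundaries of open sets are nowhere dense and the base is countable, the discontinuity set is meager. This is essentially the classical proof of Fort's theorem, which is also the content of the references the paper points to; what your write-up buys is that the lemma becomes verifiable inside the paper without consulting \cite{Kur} or \cite{San}, at the cost of using the (harmless, and satisfied in the paper's application, where $O_i(F)=\orb(a_i)\neq\emptyset$) implicit assumption that the values $O(F)$ are nonempty, which is needed for the Hausdorff metric to make sense. One cosmetic remark: in the extraction step you may take $\dist(x_n,O(F))\ge\eps_0$ rather than a strict inequality, since the failure of upper semi-continuity is usually phrased with a non-strict bound; this changes nothing in the argument.
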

This lemma is well-known; its proof can be found in, e.g.,~\cite{Kur} and~\cite{San}. 

\bigskip
Let us prove Lemma~\ref{l:stable} now. 
\bi
\item
Consider an arbitrary SSP $F \in S$ and its neighborhood $W \subset S$ such that in $W$ all periodic points of the fiber map~$f_1$ survive and no new periodic points appear. Such neighborhood exists because the map~$f_1$ is a Morse--Smale diffeomorphism, so it is structurally stable. 

\item
Denote by $M$ the set of all closed subsets of $\ci$ with the Hausdorff metric.
For each sink $a_i \in \attr$ of the map~$f_1$ consider the map $O_i: W \to M$ that puts in correspondence to the SSP $\tilde F$ the set $\orb_{\tilde F}(\tilde a_i)$, where~$\tilde a_i$ stands for the continuation for~$\tilde F$ of the periodic point~$a_i$ of the map~$F$.
\ei

\begin{prop}
\label{semicont}
The map $O_i : W \to M$ is lower semi-continuous.
\end{prop}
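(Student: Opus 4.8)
The plan is to establish lower semi-continuity of $O_i$ directly from the definition. Fix an SSP $F \in S$, the corresponding sink $a_i \in \attr$, and an $\eps > 0$. I need to find $\delta > 0$ so that whenever $d(F, \tilde F) < \delta$ (with $\tilde F \in W$), every point $x \in \orb_F(a_i)$ has a point of $\orb_{\tilde F}(\tilde a_i)$ within distance $\eps$. Recall that $\orb_F(a_i)$ is the closure of the semigroup orbit of $a_i$, that is, the closure of the set of points of the form $g(a_i)$ where $g = f_{j_k} \circ \dots \circ f_{j_1}$ ranges over all finite compositions of the fiber maps of $F$. Since we want an $\eps$-approximation, it suffices to approximate every point of the form $g(a_i)$ for a finite composition $g$, because such points are dense in $\orb_F(a_i)$.

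The key steps are as follows. First I would fix a point $x \in \orb_F(a_i)$; by density it is enough to handle $x = g(a_i)$ for some finite word, say $g = f_{j_m} \circ \dots \circ f_{j_1}$ of length $m$. For the perturbed SSP $\tilde F$ I consider the analogous composition $\tilde g = \tilde f_{j_m} \circ \dots \circ \tilde f_{j_1}$ built from the fiber maps of $\tilde F$, and the point $\tilde g(\tilde a_i) \in \orb_{\tilde F}(\tilde a_i)$. Now I must control the distance between $g(a_i)$ and $\tilde g(\tilde a_i)$. This splits into two estimates: the continuation $\tilde a_i$ is close to $a_i$ (which follows from structural stability of the Morse--Smale map $f_1$ inside $W$ and the continuous dependence of the hyperbolic periodic point on the map), and the composition $\tilde g$ evaluated at a nearby point is close to $g$ evaluated at $a_i$. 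For the latter I would use that each fiber map is Lipschitz (with a uniform constant $L$ on $W$, shrinking $W$ if needed) so that composing $m$ maps amplifies both the $C^0$-distance between the $f_j$ and $\tilde f_j$ and the distance $\dist(a_i, \tilde a_i)$ by a factor controlled by $L^m$; a standard telescoping inequality gives
\[
\dist\bigl(g(a_i), \tilde g(\tilde a_i)\bigr) \le L^m \dist(a_i, \tilde a_i) + \bigl(L^{m-1} + \dots + L + 1\bigr) \max_j \|f_j - \tilde f_j\|_{C^0}.
\]
Choosing $\delta$ small enough as a function of $m$, $L$, and $\eps$ makes the right-hand side less than $\eps$.

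The main obstacle, and the reason the statement requires care, is that the word length $m$ is not bounded: $\orb_F(a_i)$ is a closure, so approximating an arbitrary point of it may require compositions of arbitrarily large length, and the amplification factor $L^m$ blows up as $m \to \infty$. Lower semi-continuity is nonetheless fine because it is an "inner" condition: for a fixed target point $x \in \orb_F(a_i)$ the length $m$ needed to $\eps/2$-approximate $x$ by some $g(a_i)$ is a fixed finite number depending only on $x$ and $\eps$, and only after fixing this $m$ do I choose $\delta$. In other words, the order of quantifiers ($\forall x$ first, then choose the approximating word, then choose $\delta$) lets me treat $m$ as a constant in the Lipschitz estimate. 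This is exactly why we get lower semi-continuity but not continuity for free; the genuine two-sided control (upper semi-continuity) would fail precisely because the perturbed orbit closure could acquire new accumulation points, and that is what the semicontinuity lemma (Lemma~\ref{Baire}) is invoked to handle, yielding residually many continuity points.

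Finally I would assemble the conclusion: by Proposition~\ref{semicont} each $O_i$ is lower semi-continuous on $W$, so by Lemma~\ref{Baire} its set of continuity points is residual in $W$. Since $\attr$ is finite, the intersection over all sinks $a_i$ of these residual sets is again residual in $W$. At a continuity point $\tilde F$ of every $O_i$, the map $\orb(a)$ depends continuously on the SSP, and I would then argue (this being the payoff used back in Theorem~\ref{t:stable}) that continuity of $\orb_{\tilde F}(a)$ forces Lyapunov stability of $\orb_{\tilde F}(a)$ in the sense of Definition~\ref{d:stproj}. Covering $S$ by countably many such neighborhoods $W$ and taking the union of the resulting residual sets, I obtain a residual subset of $S$ on which all the sets $\orb(a)$ are Lyapunov stable, which is the assertion of Lemma~\ref{l:stable}.
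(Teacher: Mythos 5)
Your core estimates are right and your route is essentially the paper's (compare the perturbed composition $\tilde g(\tilde a_i)$ with $g(a_i)$ via the Lipschitz telescoping bound and the continuous dependence of the hyperbolic sink $\tilde a_i$ on the map), but there is a genuine quantifier problem in how you handle the unbounded word length, and it is exactly the point where the paper's one idea enters. The definition of lower semi-continuity used here reads: for every $\eps$ there is a \emph{single} $\delta$ such that for all $\tilde F$ with $d(F,\tilde F)<\delta$, \emph{every} $x\in O_i(F)$ has a point of $O_i(\tilde F)$ within $\eps$. So $\delta$ must be chosen before $x$. You explicitly choose the approximating word $g$ (hence its length $m$) after fixing $x$, and then choose $\delta$ as a function of $m$; this proves only the pointwise statement with $\delta$ depending on $x$, which is weaker than what Lemma~\ref{Baire} needs (that lemma is about maps into closed subsets of a compact space with the Hausdorff-type notion, i.e., the uniform version). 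Your own remark that ``the order of quantifiers ($\forall x$ first, then choose the approximating word, then choose $\delta$) lets me treat $m$ as a constant'' is precisely the step that does not match the definition.

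The repair is short and is what the paper does: since $\orb_F(a_i)$ is compact, pick a \emph{finite} $\eps/2$-net of it consisting of points $w_1(a_i),\dots,w_k(a_i)$ with $w_1,\dots,w_k$ finitely many fixed words. Let $m$ be the maximal length among these words and choose one $\delta$ (via your telescoping inequality with this $m$) so that each $\tilde w_j(\tilde a_i)$ is $\eps/2$-close to $w_j(a_i)$ for every $\tilde F$ that is $\delta$-close to $F$. Then an arbitrary $x\in\orb_F(a_i)$ is $\eps/2$-close to some $w_j(a_i)$, hence $\eps$-close to $\tilde w_j(\tilde a_i)\in O_i(\tilde F)$, and the single $\delta$ works uniformly. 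With this insertion your argument becomes a correct (and slightly more detailed, since you make the Lipschitz bookkeeping explicit) version of the paper's proof. The closing material about assembling Lemma~\ref{l:stable} is beyond the statement of Proposition~\ref{semicont} and duplicates Propositions~\ref{locres} and~\ref{stcont}; it is not needed here.
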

\begin{proof}
Given arbitrary $\eps$, the set $O_i(F)=\orb(a_i)$ admits a finite $\eps$-net that consists of points of $\mathrm{orb}(a_i)$. Suppose this $\eps$-net consists of points $w_1(a_i), \dots, w_m(a_i)$, where $w_j$ are finite compositions of the fiber maps. For any sufficiently close SSP~$\tilde F$ the points $\tilde w_1(\tilde a_i), \dots, \tilde w_m(\tilde a_i)$ are shifted by less than $\eps$ and form a $2 \eps$-net for $O_i(F)$. But all these points belong to $O_i(\tilde F)$, and therefore in the vicinity of any point $O_i(F)$ there are points of~$O_i(\tilde F)$.
\end{proof}

\begin{prop}\label{locres}
SSPs $\tilde F$ for which all sets $\orb_{\tilde F}(\tilde a_i)$ are Lyapunov stable form a residual subset $R_W$ of $W$.
\end{prop}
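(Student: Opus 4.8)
The plan is to combine the lower semicontinuity just established (Proposition~\ref{semicont}) with the semicontinuity lemma (Lemma~\ref{Baire}), using the key observation that \emph{continuity points} of each map $O_i$ are precisely the SSPs at which $\orb_{\tilde F}(\tilde a_i)$ is Lyapunov stable. First I would apply Lemma~\ref{Baire} to each of the finitely many maps $O_i\colon W \to M$, $i=1,\dots,\#\attr$, to conclude that the set $G_i \subset W$ of continuity points of $O_i$ (with respect to the Hausdorff metric on $M$) is residual in $W$. The finite intersection $R_W := \bigcap_i G_i$ is then residual in $W$, and on $R_W$ every map $O_i$ is continuous simultaneously.

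The main step is to show that continuity of $O_i$ at $\tilde F$ implies that $\orb_{\tilde F}(\tilde a_i)$ is Lyapunov stable in the sense of Definition~\ref{d:stproj}. I would argue by contradiction: suppose $\orb_{\tilde F}(\tilde a_i)$ is not stable, so there is a neighborhood $U \supset \orb_{\tilde F}(\tilde a_i)$ such that trajectories starting arbitrarily close to $\Sig \times \orb_{\tilde F}(\tilde a_i)$ eventually leave $\Sig \times U$. The point is that such instability can be transferred to nearby SSPs: for any neighborhood $W'$ of $\tilde F$, one can find $\tilde F' \in W'$ whose orbit closure $\orb_{\tilde F'}(\tilde a_i')$ is strictly larger, escaping the neighborhood $U$. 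Concretely, the instability means there is a composition of fiber maps pushing a point near $\orb_{\tilde F}(\tilde a_i)$ out toward some point $z \notin U$; perturbing the fiber maps slightly, one arranges that a point of $\orb_{\tilde F'}(\tilde a_i')$ itself lands near $z$. This produces a sequence $\tilde F' \to \tilde F$ with $O_i(\tilde F')$ not converging to $O_i(\tilde F)$ in the Hausdorff metric, contradicting continuity of $O_i$ at $\tilde F$. Hence at every continuity point all the sets $\orb_{\tilde F}(\tilde a_i)$ are Lyapunov stable, so $R_W$ is exactly the desired residual set.

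The hard part will be making the instability-transfer argument precise, i.e.\ showing that failure of Lyapunov stability of $\orb_{\tilde F}(\tilde a_i)$ forces a genuine upper jump of the Hausdorff-limit of $O_i$ under small perturbations. Lower semicontinuity already guarantees that $O_i(\tilde F')$ cannot shrink away from $O_i(\tilde F)$; the content is to rule out stable behavior at a \emph{discontinuity}, which amounts to the standard fact that a lower semicontinuous set-valued map is continuous at $\tilde F$ exactly when it is also upper semicontinuous there, and that upper semicontinuity of $\orb(\tilde a_i)$ is equivalent to its Lyapunov stability. I would isolate this equivalence as the core lemma: \emph{Lyapunov stability of $\orb(a)$ is equivalent to upper semicontinuity of $O_i$ at $F$.} One direction is immediate from the definition of stability (a trapping neighborhood controls all forward iterates, hence controls orbit closures of nearby systems); the other uses that instability yields orbit segments escaping a fixed neighborhood, which persist under perturbation by continuity of compositions of the fiber maps.

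Finally, I would assemble the global statement from the local one. The preceding construction gives, for each $F \in S$, a neighborhood $W = W(F)$ and a residual subset $R_{W} \subset W$ on which all $\orb(\tilde a_i)$ are stable. Since $S$ is a separable metric space (being an open subset of $(C^r(\ci))^s$), it is Lindel\"of, so the open cover $\{W(F)\}_{F \in S}$ admits a countable subcover $\{W_k\}_{k \in \mathbb N}$. The set of SSPs in $S$ at which all sink orbit closures are Lyapunov stable then contains $\bigcup_k (R_{W_k} \cap W_k)$ together with a routine patching on overlaps; more cleanly, one checks directly that this set is residual by verifying it contains a countable intersection of dense open sets, using that each $R_{W_k}$ is residual in the Baire space $W_k$ and that a subset of $S$ which is residual in each member of an open cover is residual in $S$. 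This yields Lemma~\ref{l:stable}.
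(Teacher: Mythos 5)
Your proposal is correct and follows essentially the same route as the paper: apply the semicontinuity lemma to each lower semicontinuous map $O_i$, intersect the (finitely many) residual sets of continuity points, and use the fact that Lyapunov instability of $\orb_{\tilde F}(\tilde a_i)$ forces discontinuity of $O_i$ --- the paper isolates exactly this instability-transfer step as Proposition~\ref{stcont}. The only difference is that you also fold in the globalization over a countable cover, which belongs to the deduction of Lemma~\ref{l:stable} rather than to this proposition.
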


\begin{proof}
The map $O_i$ is lower semi-continuous by Proposition~\ref{semicont}. Hence, by Lemma~\ref{Baire} it is continuous on some residual subset $R_i \subset W$. Proposition~\ref{stcont} below says that if the set $\orb(\tilde a_i)$ is Lyapunov unstable, then it does not continuously depend on the map. Therefore, for any SSP from~$R_i$ the set $\orb(\tilde a_i)$ is Lyapunov stable. It suffices to set $R_W$ equal to the intersection of all~$R_i$.
\end{proof}

Lemma~\ref{l:stable} can be obtained from Proposition~\ref{locres} using the following trivial statement:

\begin{prop}
Suppose that every point $x$ of a separable metric space~$S$ admits a neighborhood $U(x)$ such that the set $B \subset S$ intersects $U(x)$ by a residual subset of $U(x)$. Then~$B$ is residual in $S$. 
\end{prop}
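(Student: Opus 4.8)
The plan is to prove the final proposition, which is a standard ``local-to-global'' statement for residual sets: if every point of a separable metric space $S$ has a neighborhood on which $B$ is residual, then $B$ is residual in all of $S$. The key idea is to use separability to reduce an uncountable open cover to a countable subcover, and then to patch together the local residual descriptions into a single global countable intersection of dense open sets.

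First I would invoke separability. The space $S$ is separable metric, hence second countable, hence Lindel\"of; therefore the open cover $\{U(x)\}_{x \in S}$ admits a countable subcover $\{U_k\}_{k \in \NN}$. For each $k$, the hypothesis gives that $B \cap U_k$ is residual in $U_k$, so there is a countable family $\{G_{k,m}\}_{m \in \NN}$ of sets, each open and dense in $U_k$, with $\bigcap_m G_{k,m} \subset B \cap U_k$. The goal is to assemble from the doubly-indexed family $\{G_{k,m}\}$ a countable collection of dense open subsets of $S$ whose intersection is contained in $B$.

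The main technical point — and where I expect the only real care is needed — is passing from ``dense open in the subspace $U_k$'' to genuine dense open sets of $S$. A set that is open in $U_k$ is automatically open in $S$ (since $U_k$ is open in $S$), so openness transfers for free. Density is the delicate part: $G_{k,m}$ is dense in $U_k$ but need not be dense in $S$. The fix is to enlarge it by adjoining the complement of the closure of $U_k$: set
\[
H_{k,m} := G_{k,m} \cup (S \setminus \overline{U_k}).
\]
Then $H_{k,m}$ is open in $S$ (a union of two open sets), and it is dense in $S$, because its closure contains $\overline{U_k}$ (as $G_{k,m}$ is dense in $U_k$, so $\overline{G_{k,m}} \supset \overline{U_k}$) together with $S \setminus \overline{U_k}$, and these two cover $S$. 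Thus each $H_{k,m}$ is a dense open subset of $S$.

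Finally I would verify that $\bigcap_{k,m} H_{k,m} \subset B$, exhibiting $B$ as containing a countable intersection of dense open sets, which is exactly the definition of residual. Indeed, take any $x$ lying in every $H_{k,m}$. Since $\{U_k\}$ covers $S$, the point $x$ lies in some $U_{k_0}$, so in particular $x \in \overline{U_{k_0}}$, whence $x \notin S \setminus \overline{U_{k_0}}$; as $x \in H_{k_0, m}$ for every $m$, this forces $x \in G_{k_0, m}$ for every $m$, i.e.\ $x \in \bigcap_m G_{k_0, m} \subset B \cap U_{k_0} \subset B$. Therefore $\bigcap_{k,m} H_{k,m} \subset B$, and since this is a countable intersection of dense open subsets of $S$, the set $B$ is residual in $S$, completing the proof.
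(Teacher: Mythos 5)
Your proof is correct and follows essentially the same local-to-global strategy as the paper: separability produces a countable family of neighborhoods on which $B$ is residual, and the local data are assembled into a countable intersection of dense open (equivalently, residual) subsets of $S$ contained in $B$. The only cosmetic difference is that the paper takes neighborhoods of a countable dense subset (so their union $U$ is merely dense open rather than a cover) and works with the residual sets $B_n = U \setminus (U_n \setminus B)$, whereas you extract a Lindel\"of subcover and enlarge each locally dense open set by adjoining $S \setminus \overline{U_k}$; both devices accomplish the same thing.
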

\begin{proof}
Consider a countable everywhere dense subset and for each its point take its neighborhood~$U_n$ such that $B\cap U_n$ is residual. Then the set $U=\bigcup U_n$ is open and everywhere dense. Let $B_n=U \setminus (U_n \setminus B)$. It is easy to see that $B_n$ is residual. Now, $\bigcap B_n \subset B$, i.e.,~$B$ contains a countable intersection of residual sets which is itself residual.
\end{proof}

\begin{prop} \label{stcont} 
For $F \in S$, if the set $\orb(a_i)$ is Lyapunov unstable, then the map~$O_i$ is discontinuous at~$F$.
\end{prop}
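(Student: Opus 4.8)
The plan is to establish the contrapositive of upper semicontinuity. Since $O_i$ is already lower semicontinuous by Proposition~\ref{semicont}, to prove that $O_i$ is discontinuous at $F$ it suffices to produce SSPs $\tilde F$ that are arbitrarily $C^r$-close to $F$ and for which $\orb_{\tilde F}(\tilde a_i)$ contains a point whose distance to $\orb_F(a_i)$ is bounded below by a fixed positive constant. First I would translate the Lyapunov instability of $\orb(a_i)$ into escape data in the fiber. By Definition~\ref{d:stproj}, instability of $\orb(a_i)$ means that $\Sig \times \orb(a_i)$ is Lyapunov unstable for $F$; since the $F$-orbit of $(\omega,p)$ projects in the fiber to the compositions $f_{\omega_{n-1}}\circ\dots\circ f_{\omega_0}(p)$ and neighborhoods of $\Sig \times \orb(a_i)$ may be taken of the form $\Sig \times U$, this is equivalent to the following statement: there is $\eps_0>0$ such that for every $\eta>0$ there exist a point $p\in\ci$ with $\dist(p,\orb(a_i))<\eta$ and a finite composition $g=f_{j_N}\circ\dots\circ f_{j_1}$ of the fiber maps with $\dist(g(p),\orb(a_i))\ge\eps_0$.

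The core of the argument is to realize $p$ as a genuine orbit point after a small perturbation. Using that orbit points are dense in $\orb(a_i)$, I would choose a genuine orbit point $b=w(a_i)$ with $\dist(b,p)<\eta$, write $w=f_j\circ w'$, and set $q=w'(a_i)$, so that $b=f_j(q)$. Note that $g(b)=g(w(a_i))\in\orb_F(a_i)$ while $g(p)$ is $\eps_0$-far, so $g$ must be strongly expanding between $b$ and $p$; this foreshadows that $g$ is long and is exactly the source of the difficulty below. I would then perturb the single fiber map $f_j$ inside a small ball $B(q,\rho)$, replacing it by $\tilde f_j$ that agrees with $f_j$ off $B(q,\rho)$ and satisfies $\tilde f_j(q)=p$; this is a $C^r$ bump of height $\dist(b,p)$. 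Because $f_1$ is Morse--Smale, the sink persists under the perturbation, and if $B(q,\rho)$ avoids $a_i$ then $\tilde a_i=a_i$. If, moreover, $B(q,\rho)$ avoids the finitely many other points at which the letter $f_j$ is applied along the finite orbit segment producing $q$ and along the escape $g$, then for $\tilde F$ the orbit of $\tilde a_i$ still reproduces the segment up to $q$, is diverted to $\tilde f_j(q)=p$, and is then carried by the \emph{unperturbed} composition $g$ to the point $g(p)$. Hence $g(p)\in\orb_{\tilde F}(\tilde a_i)$ and $\dist(g(p),\orb_F(a_i))\ge\eps_0$.

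Once the perturbation is seen to be $C^r$-small, the conclusion is immediate: one obtains SSPs $\tilde F$ arbitrarily close to $F$ whose orbit closures $O_i(\tilde F)=\orb_{\tilde F}(\tilde a_i)$ contain a point at Hausdorff distance at least $\eps_0$ from $O_i(F)=\orb_F(a_i)$, so $O_i$ is not upper semicontinuous and therefore discontinuous at $F$.

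The main obstacle will be controlling the $C^r$-size of the diverting bump together with its localization. A $C^r$ bump on $B(q,\rho)$ of height $d=\dist(b,p)$ has $C^r$-norm of order $d\,\rho^{-r}$, so $\rho$ cannot be taken too small; yet $\rho$ must stay below the separation $\gamma>0$ between $q$ and the finitely many other points where $f_j$ acts along the orbit segment and along $g$, for otherwise the perturbation would corrupt either the orbit segment or the escape map. The tension is that sharper approximants $b\to p$ require longer words $w$, hence more collision points, and for small $\eta$ the witness $g$ may itself be long, so that $\gamma$ could shrink as the approximation improves. The resolution I would pursue is to choose the witness $(p,g)$, the approximant $b$, the displacement $d$ and the radius $\rho$ in a mutually consistent order: fix the witness, read off the resulting \emph{finite} separation $\gamma>0$ guaranteed by compactness and the finiteness of the configuration, and only then take the displacement $d\ll\gamma^r$ (refining the approximant while keeping the predecessor $q$ isolated from the collision points). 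Securing this positive separation uniformly against the growth of word length is the genuinely delicate step; the remaining estimates are routine.
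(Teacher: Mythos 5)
Your reduction of instability to fiber data and your overall strategy (divert a true orbit point onto the escaping point $p$ by a perturbation, then let the unperturbed escape word $g$ carry it into $U$) is reasonable, but the step you yourself flag as delicate is a genuine gap, and the ordering of choices you propose does not close it. The difficulty is quantitative and circular. Since $\orb(a_i)$ is forward invariant under all fiber maps, any escaping point $p$ lies strictly outside $\orb(a_i)$, so for a fixed witness $(p,g)$ the displacement $d=\dist(b,p)$ of your bump is bounded below by $d_0=\dist(p,\orb(a_i))>0$, no matter how you refine the approximant $b=w(a_i)$. Hence the $C^r$-size of the bump is at least of order $d_0\,\rho^{-r}\ge d_0\,\gamma^{-r}$, and to make it smaller than a prescribed $\delta$ you need $d_0<\delta\gamma^{r}$. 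But $d_0$ is determined by the witness, and the only way to decrease it is to pass to a witness at a smaller scale $\eta$, which lengthens both $w$ and $g$, enlarges the collision set, and may shrink $\gamma$ without any control: $\gamma$ is the minimal gap in a finite orbit configuration whose cardinality grows as $\eta\to 0$, and nothing prevents it from decaying faster than $(\eta/\delta)^{1/r}$. So ``fix the witness, read off $\gamma$, then take $d\ll\gamma^{r}$'' cannot be carried out: $d$ cannot be made small for a fixed witness, and $\gamma$ cannot be fixed before the witness. (A smaller point: if the perturbed letter is $f_1$, the bump must also avoid the whole $f_1$-periodic orbit of $a_i$, not just the point $a_i$.)

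The paper avoids localization entirely. It perturbs \emph{all} fiber maps by one and the same constant $c\in[0,2\delta]$ in the lift to $\RR$; such a perturbation is at $C^r$-distance exactly $c$ from $F$ for every $r$, since no derivative changes. Monotonicity of the lifted maps gives $\tilde w_{F_c}(\tilde a_{F_c})\ge \tilde w(\tilde a)+c$, so the composite $\tilde v_{F_c}(\tilde w_{F_c}(\tilde a_{F_c}))$ lies below $\tilde v(\tilde x)$ at $c=0$ and above it at $c=2\delta$; the intermediate value theorem then produces $\hat c$ with $v_{F_{\hat c}}(w_{F_{\hat c}}(a_{F_{\hat c}}))=v(x)\in U$ exactly. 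If you want to rescue your argument, this is the idea to import: replace the localized bump by a global monotone one-parameter family and use continuity to hit the target, rather than trying to control the support of a bump against the combinatorics of long words.
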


\begin{proof}
Denote $a=a_i$.
Let us suppose that the set $\orb(a)$ is Lyapunov unstable and prove that then it depends on the map discontinuously. Instability implies that there exists an open set~$U$ such that $U$ does not intersect $\orb(a)$ and for any $\delta$ there are a point~$x$ and a composition $v$ of the fiber maps such that 
\begin{equation} \label{e:runaway}
x \in U_{\delta}(\orb(a)), \; v(x) \in U.
\end{equation}  
To show that the map $O_i$ is discontinuous at $F$, it is sufficient to construct, for any given~$\delta$, a map $G$ that is $2\delta$-close to~$F$ in $(\Diff^r)^s$--metric and such that the set $\orb_G(a_G)$ intersects~$U$. From this point on, for an SSP $G$ close to $F$, the lower index~$G$ in the notation for some object related to~$F$ means that we are considering the analogous object for~$G$.

Let us find, given an arbitrary $\delta$, a point $x$ and a word $v$ such that~\eqref{e:runaway} holds. Since $\oorb(a)$ is dense in $\orb(a)$, there exists a composition of fiber maps~$w$ such that $\dist(w(a), x) < 2 \delta.$ Below we will find an SSP $G$ that is $2\delta$-close to $F$ and such that 
\begin{equation} \label{e:discontinuous}
v_G(w_G(a_G)) = v(x).
\end{equation}
Then, since $v(x) \in U$, the point $v_G(w_G(a_G))$ will be the required point that belongs to $\orb_G(a_G)$, but lies inside~$U$. The proof will be complete when we find such a map~$G$.

\emph{The construction of $G$.} Choose arbitrary lifts $\tilde f_i$ of the fiber maps of~$F$ from the circle $\RR/\mathbb Z$ to the line~$\RR$. The choice of these lifts will define the lifts $\tilde v$ and $\tilde w$ of the maps $v$ and $w$. Choose the lifts $\tilde a$ and $\tilde x$ of the points $a$ and $x$ in such a way that $|\tilde x - \tilde w(\tilde a)| < 2 \delta$. Without loss of generality we may assume that 
\begin{equation} \label{e:btw}
\tilde w(\tilde a) < \tilde x < \tilde w(\tilde a)+2\delta.
\end{equation} 

Recall that our fiber maps are orientation preserving, i.e., since the fiber is one-dimensional, monotonically increasing. Consider a perturbed SSP
$$F_c=\{f_1+c, \dots, f_k+c\}, \; c \in [0, 2\delta].$$   

When $c=0$, one has $\tilde v_{F_0}(\tilde w_{F_0}(\tilde a_{F_0}))<\tilde v(\tilde x)$. 

Now let $c=2\delta$. Then $\tilde a_{F_{2 \delta}} \ge \tilde a$, because when lifting the graph of a monotonically increasing function the sink also moves upwards. 
Since $\tilde w_{F_{2 \delta}} \ge \tilde w+2\delta$, we have $\tilde w_{F_{2 \delta}}(\tilde a_{F_{2 \delta}}) \ge \tilde x$. 
Since $\tilde v_{F_{2 \delta}} \ge \tilde v+2\delta \ge \tilde v$, the previous inequality implies $\tilde v_{F_{2 \delta}}(\tilde w_{F_{2 \delta}}(\tilde a_{F_{2 \delta}})) \ge \tilde v(\tilde x)$.

By the intermediate value theorem, for some number $\hat c \in [0, 2 \delta]$ we will have $\tilde v_{F_{\hat c}}(\tilde w_{F_{\hat c}}(\tilde a_{F_{\hat c}})) = \tilde v( \tilde x)$. Hence the SSP $G=F_{\hat c}$ satisfies~\eqref{e:discontinuous}.
\end{proof}

\section{Acknowledgements}
We would like to thank Yu. S. Ilyashenko and S.S. Minkov for useful discussions and helpful comments.
\newpage

\newpage
\end{document}